\pdfminorversion=4
\documentclass[11pt, oneside]{article}   	
\usepackage{geometry}                		
\geometry{letterpaper}                   		
\usepackage{graphicx}				
\usepackage{amssymb,amsthm,xcolor}					
\usepackage{natbib}
\usepackage{amsmath}
\usepackage{epstopdf}
\usepackage{bbm}
\usepackage{mathtools}

\newcommand{\abs}[1]{\left\vert#1\right\vert}

\newcommand{\norm}[1]{\big\Vert#1\big\Vert}

\numberwithin{equation}{section}
\theoremstyle{plain}
\newtheorem{thm}{Theorem}[section]
\theoremstyle{plain}
\newtheorem{lem}{Lemma}[section]
\theoremstyle{remark}
\newtheorem{rem}{Remark}[section]
\theoremstyle{plain}
\newtheorem{ass}{Assumption}[section]




\begin{document}

\title{Adaptive posterior convergence rates in non-linear latent variable models}

\author{
Shuang Zhou\footnote{Corresponding author: shuang.zhou@stat.fsu.edu},  \quad Debdeep Pati\\
{\em Department of Statistics, Florida State University}\\
Anirban Bhattacharya  \\
{\em Department of Statistics, Texas A\&M University}\\
David Dunson \\
{\em Department of Statistical Science,  Duke University}
}

%
%

\maketitle

\begin{center}
\textbf{Abstract}
\end{center}
Non-linear latent variable models have become increasingly popular in a variety of applications. However, there has been little study on theoretical properties of these models. In this article, we study rates of posterior contraction in univariate density estimation for a class of non-linear latent variable models where unobserved $\mbox{U}(0,1)$ latent variables are related to the response variables via a random non-linear regression with an additive error. Our approach relies on characterizing the space of densities induced by the above model as kernel convolutions with a general class of continuous mixing measures. The literature on posterior rates of contraction in density estimation almost entirely focuses on finite or countably infinite mixture models. We develop approximation results for our class of continuous mixing measures. Using an appropriate Gaussian process prior on the unknown regression function, we obtain the optimal frequentist rate up to a logarithmic factor under standard regularity conditions on the true density.
\vspace*{.3in}\\
\noindent\textsc{Keywords}: Bayesian nonparametrics; Density estimation;  Gaussian process; One factor model; 
Rate of convergence

\section{Introduction}

Latent variable models are popular in statistics and machine learning for dimension reduction,  parsimonious dependence modeling and data visualization. Linear latent variable models, such as factor models or probabilistic principal components, assume a linear relationship between the observed and latent variables. A number of {\em non-linear} latent variable models have been proposed in the literature for structured dimension reduction and manifold learning; example include the generative tomographic mapping (GTM; \cite{bishop1998gtm,bishop1998developments}) and the Gaussian process latent variable model (GP-LVM; \cite{lawrence2004gaussian,lawrence2005probabilistic,lawrence2007hierarchical,ferris2007wifi}). 
These models flexibly model the relationship between observed and latent variables, notably using Gaussian process (GP) priors. In spite of their empirical success, a general theoretical framework studying the properties of the induced density of the data after marginalizing out the latent variables seems lacking.  

\cite{kundu2011bayesian} proposed an NL-LVM (non linear latent variable model) approach for univariate density estimation in which the response variables are modeled as unknown functions (referred to as the \emph{transfer function}) of uniformly distributed latent variables with an additive Gaussian error. Operationally similar to a univariate GP-LVM model, the latent variable specification allows straightforward posterior computation via conjugate posterior updates. Since inverse c.d.f. transforms of uniform random variables can generate draws from any distribution, a prior with large support on the space of transfer functions can approximate draws from any continuous distribution function arbitrarily closely. One can also conveniently center the non-parametric model on a parametric family by centering the prior on the transfer function on a parametric class of quantile (or inverse c.d.f.) functions $\{F_{\theta}^{-1} ~:~ \theta \in \Theta\}$. While such centering on parametric guesses can be achieved in Dirichlet process (DP; \cite{ferguson1973bayesian,ferguson1974prior}) mixture models by appropriate choice of the base measure $G_0$, posterior computation becomes complicated unless the base measure is conjugate to the kernel $\mathcal{K}$. 

Although there is an increasingly rich literature on asymptotic properties of Bayesian density estimation, this literature mainly focuses on discrete mixture models that have a fundamentally different form from the NL-LVM models.  Hence, it is unclear what types of asymptotic properties NL-LVMs have for density estimation, and technical tools developed in the existing literature cannot be fully utilized to study this problem.  Our focus is on closing this gap focusing in particular on studying how the posterior measure for the unknown density concentrates around the true density $f_0$ as the sample size $n$ increases.  Assuming $f_0$ belongs to a H\"older space of univariate functions with smoothness $\beta$, it is known that the minimax optimal rate of convergence for an estimate of the density is $n^{-\beta/(2\beta +1)}$.  Assuming the prior for the unknown density is induced through a discrete mixture of exponential power distributions, \cite{kruijer2010adaptive} showed that the posterior measure for the density concentrates at the optimal rate up to a logarithmic factor.  Their result shows rate adaptivity to any degree of smoothness of the true density $f_0$, generalizing previous results, such as posterior consistency \cite{ghosal1999posterior} or optimal rates for a particular smoothness level \cite{ghosal2007posterior}. 
We seek to obtain an adaptive rate result for a class of NL-LVM models, and in the process significantly advance technical understanding of this relatively new class of models.

The main contributions of this article are as follows.  We provide an accurate characterization of the prior support in terms of kernel convolution with a class of continuous mixing measures. We provide conditions for the mixing measure to admit a density with respect to Lebesgue measure and show that the prior support of the NL-LVM is at least as large as that of DP mixture models. We then develop approximation results for the above class of continuous mixing measures, and show adaptive convergence rates.  This involves some novel issues and technical details relative to the existing literature.

The rest of the article is organized as follows. We introduce relevant notations and terminologies in Section \ref{sec:notn}. To make the article self-contained, we also provide a brief background on Gaussian process priors. In Section \ref{sec:assf0}, we formulate our assumptions on the true density $f_0$ and in the following section, we describe the NL-LVM model and relate it to convolutions. We state our main theorem on convergence rates in Section \ref{sec:compact}. Section \ref{sec:auxiliary} provides auxiliary results and Section \ref{main} proves the main theorem of posterior concentration rate. We discuss some implications of our results and outlines possible future directions Section \ref{sec:discussion}.

\section{Notations}\label{sec:notn}

Throughout the article, $Y_1,\ldots, Y_n$ are independent and identically
distributed with density $f_0 \in \mathcal{F}$, the set of all densities on $\mathbb{R}$ absolutely continuous with respect to the Lebesgue measure $\lambda$. The supremum and $\mbox{L}_1$-norm are denoted by $\norm{\cdot}_{\infty}$ and $\norm{\cdot}_{1}$, respectively. We let $\norm{\cdot}_{p, \nu}$ denote the norm of
$L_p(\nu)$, the space of measurable functions with $\nu$-integrable $p$th absolute power. For two density functions $f, g \in \mathcal{F}$, let $h$ denote the Hellinger distance defined as $h^2(f, g) = \norm{\sqrt{f}-\sqrt{g}}_{2, \lambda}=\int (f^{1/2} - g^{1/2})^2 d\lambda $, $K(f,g)$ the Kullback-Leibler divergence given by $K(f,g) = \int \log(f/g) f d\lambda$ and $V(f,g)=\int \log(f/g)^2 f d\lambda$. The notation $C[0, 1]$ is used for the space of continuous functions $f : [0, 1] \rightarrow \mathbb{R}$ endowed with the supremum norm. For $\beta >0$, we let $C^{\beta}[0, 1]$ denote the H\"{o}lder space of order $\beta$, consisting of the functions $f \in C[0, 1]$ that have $\lfloor \beta \rfloor$ continuous
derivatives  with the $\lfloor \beta \rfloor$th derivative
$f^{\lfloor \beta \rfloor}$ being Lipschitz continuous of order $\beta -\lfloor \beta \rfloor$. The $\epsilon$-covering number $N(\epsilon,S,d)$ of a semi-metric space $S$ relative to the semi-metric $d$ is the minimal number of balls of radius $\epsilon$ needed to cover $S$. The logarithm of the
covering number is referred to as the entropy.  By near-optimal rate of convergence we mean optimal rate of convergence slowed down by a logarithmic factor.

We write ``$\precsim$'' for inequality up to a constant multiple. Let \linebreak $\phi(x) = (2\pi)^{-1/2}\exp(-x^2/2)$ denote the standard normal density, and let $\phi_{\sigma}(x) = (1/\sigma) \phi(x/\sigma)$. Let an asterisk denote a convolution e.g., $(\phi_{\sigma} * f)(y) = \int \phi_{\sigma}(y - x)f(x)dx$, and let $\phi_{\sigma}^{(i)} * f$ denote the $i$-fold convolution.
The support of a density $f$ is denoted by supp($f$).

We briefly recall the definition of the RKHS of a Gaussian process prior; a detailed review can be found in \cite{van2008reproducing}. A Borel measurable random element $W$ with values in a separable Banach space $(\mathbb{B}, \norm{\cdot})$ (e.g., $C[0,1]$) is called Gaussian if the random variable $b^*W$ is normally distributed for any element $b^* \in \mathbb{B}^*$, the dual space of $\mathbb{B}$. The reproducing kernel
Hilbert space (RKHS) $\mathbb{H}$ attached to a zero-mean Gaussian process $W$ is
defined as the completion of the linear space of functions $t \mapsto EW(t)H$
relative to the inner product
\begin{eqnarray*}
\langle \mbox{E} W(\cdot)H_1; \mbox{E} W(\cdot )H_2\rangle_{\mathbb{H}} = \mbox{E} H_1H_2,
\end{eqnarray*}
where $H, H_1$ and $H_2$ are finite linear combinations of the form $\sum_{i}a_{i}W(s_i)$
with $a_i \in \mathbb{R}$ and $s_i$ in the index set of $W$.

Let $W = (W_t: t \in \mathbb{R})$ be a Gaussian process with squared exponential covariance kernel. The spectral measure $m_{w}$ of $W$ is absolutely continuous with respect to the Lebesgue measure $\lambda$ on $\mathbb{R}$ with the Radon-Nikodym derivative given by
\begin{eqnarray*}
\frac{dm_{w}}{d\lambda}(x) = \frac{1}{2\pi^{1/2}}e^{-x^2/4}.
\end{eqnarray*}

Define a scaled Gaussian process $W^a=(W_{at}: t \in [0,1])$, viewed as a map in $C[0,1]$. Let $\mathbb{H}^a$ denote the RKHS of $W^a$, with the corresponding norm $\norm{\cdot}_{\mathbb{H}^a}$. The unit ball in the RKHS is denoted $\mathbb{H}^a_1$. 

Throughout the paper, $C, C_1, C_2, \dots$ denote global constants whose value may change one line to another.

\section{Assumptions on the true density}\label{sec:assf0}

It has been widely recognized that one needs certain smoothness assumptions and tail conditions on the true density $f_0$ to derive posterior convergence rates. We make the following assumptions in our case:
\begin{ass}\label{ass:1}
Assume $\log{f_0}\in C^{\beta}[0, 1]$. Let $l_j(x)=\frac{d^j}{dx^j}\log f_0(x)$ be the derivatives for $j=1, \dots, r$ with $r= \lfloor \beta \rfloor$. For any $\beta >0$, assume that there exists a constant $L > 0$ such that
\begin{eqnarray}\label{eq:tails}
|l_r(x) - l_r(y)| \le L|x-y|^{\beta - r},
\end{eqnarray}
for all $x \neq y$.
\end{ass}

\begin{ass}\label{ass:2}
Assume $f_0$ is compactly supported on $[a_0,b_0]$, for $-\infty < a_0 <b_0 < \infty$, and that there exists some interval $[a,b] \subset [a_0, b_0]$ such that $f_0$ is nondecreasing on $[a_0, a]$, bounded away from $0$ on $[a,b]$ and non-increasing on $[b, b_0]$.
\end{ass}
Assumption \ref{ass:1} is useful in simplifying expressions for $f_0$ as convolutions with a given density, providing a key piece in our theoretical developments. Similar assumption on the local smoothness appeared in \cite{kruijer2010adaptive}, while in our case the global smoothness assumption is sufficient since $f_0$ is assumed to be compactly supported. 
 Assumption \ref{ass:2} guarantees that for every $\delta > 0$, there exists a constant $C > 0$ such that $f_0 * \phi_{\sigma} \geq Cf_0$ for every $\sigma < \delta$.  

\section{The NL-LVM model}\label{sec:model}

Consider the nonlinear latent variable model,
\begin{align}\label{eq:nl-lvm}
y_i &= \mu(\eta_i) + \epsilon _i, \quad \epsilon_i \sim \mbox{N}(0, \sigma^2), \, (i=1, \ldots, n)\\
\mu &\sim \Pi_{\mu}, \quad \sigma \sim \Pi_{\sigma},\quad \eta_i \sim \mbox{U}(0,1),
\end{align}
where $\eta_i$'s are latent variables, $\mu \in C[0, 1]$ is a \emph{transfer function} relating the latent variables to the observed variables and $\epsilon_i$ is an idiosyncratic error. The density of $y$ conditional on the transfer function $\mu$ and scale $\sigma$ is obtained on marginalizing out the latent variable as
\begin{eqnarray}\label{eq:gpt_def}
f(y; \mu, \sigma) \stackrel{\text{def}}{=} f_{\mu, \sigma}(y)= \int_{0}^{1}\phi_{\sigma}(y-\mu(x))dx.
\end{eqnarray}
Define a map $g: C[0,1] \times [0,\infty) \to \mathcal{F}$ with $g(\mu, \sigma) = f_{\mu, \sigma}$.
One can induce a prior $\Pi$ on $\mathcal{F}$ via the mapping $g$ by placing independent priors
$\Pi_{\mu}$ and $\Pi_{\sigma}$ on $C[0,1]$ and $[0, \infty)$ respectively, with $\Pi = (\Pi_{\mu} \otimes \Pi_{\sigma}) \circ g^{-1}$. \cite{kundu2011bayesian} assumed a Gaussian process prior with squared exponential covariance kernel on $\mu$ and an inverse-gamma prior on $\sigma^2$.

It is not immediately clear whether the class of densities $f_{\mu, \sigma}$ in the range of $g$ encompass a large subset of the density space. The following intuition relates the above class with continuous convolutions which plays a key role in our proofs. Let $f_0$ be a continuous density with cumulative distribution function $F_0(t) = \int_{-\infty}^{t} f_0(x) dx$. Assume $f_0$ to be non-zero almost everywhere within its support, so that $F_0 : \mbox{supp}(f_0) \to [0,1]$ is strictly monotone and hence has an inverse $F_0^{-1} : [0,1] \to \mbox{supp}(f_0)$ satisfying $F_0\{F_0^{-1}(t)\} = t$ for all $t \in \mbox{supp}(f_0)$.
Letting $\mu_0(x) = F_0^{-1}(x)$, one obtains
\begin{eqnarray}\label{eq:conv}
f_{\mu_0, \sigma}(y) = \int_{0}^{1} \phi_{\sigma}(y-F_0^{-1}(x))dx = \int_{-\infty}^{\infty} \phi_{\sigma}(y-t) f_0(t) dt,
\end{eqnarray}
where the second equality follows from the change of variable theorem. Thus, $f_{\mu_0,\sigma}(y) = \phi_{\sigma}*f_0$, i.e., $f_{\mu_0, \sigma}$ is the convolution of $f_0$ with a normal density having mean $0$ and standard deviation $\sigma$. It is well known that the convolution $\phi_{\sigma}*f_0$ can approximate $f_0$ arbitrary closely as the bandwidth $\sigma \to 0$ in the sense that for $f_0 \in L_p(\lambda)$ with $p \geq1$, $\norm{\phi_{\sigma}*f_0 - f_0}_{p, \lambda} \to 0$ as $\sigma \to 0$. For Holder-smooth functions, the order of approximation can be characterized in terms of the smoothness. If $f_0 \in C^{\beta}[0, 1]$ with $\beta \le 2$, it follows from standard Taylor series expansion that $|| \phi_{\sigma}*f_0 - f_0||_{\infty} = O(\sigma^{\beta})$. For $\beta > 2$, a similar Taylor series expansion yields a sub-optimal error $|| \phi_{\sigma}*f_0 - f_0||_{\infty} = O(\sigma^{2})$. In this case, we can refine the approximation by convoluting with a sequence of functions $f_j$ constructed by the procedure,
\begin{eqnarray}\label{eq:constrction}
f_{j+1} = f_0 - \triangle_{\sigma} f_j, \quad \triangle_{\sigma}f_j = \phi_{\sigma}*f_j - f_j, \quad j \ge 1.
\end{eqnarray} 
For $f_0 \in C^{\beta}[0,1]$ with $\beta \in (2j, 2j+2]$ we have $\norm{\phi_{\sigma}*f_j - f_0}_{\infty} = O(\sigma^{\beta})$ \citep{kruijer2010adaptive}. Although the $f_j$s need not be non-negative in general, we show that they are non-negative on $\mbox{supp}(f_0)$ when $f_0$ is compactly supported. It can be additionally shown that the normalizing constant is $1 + O(\sigma^{\beta})$; let $h_j$ denote the density obtained by normalizing $f_j$. We then approximate $f_0$ by $\phi_{\sigma} * h_{\beta}$, where $h_{\beta} = h_j$ for $\beta \in (2j, 2j+2]$.


Let $\tilde{\lambda}$ denote the Lebesgue measure on $[0,1]$, or equivalently, the $\mbox{U}[0,1]$ distribution. For any measurable function $\mu : [0,1] \to \mathbb{R}$, let $\nu_{\mu}$ denote the induced measure on $(\mathbb{R}, \mathcal{B})$, with $\mathcal{B}$ denoting the Borel sigma-field on $\mathbb{R}$. Then, for any Borel measurable set $B$, $\nu_{\mu}(B) = \tilde{\lambda}(\mu^{-1}(B))$, where $\mu^{-1}(B) = \{x \in [0,1] ~ : ~ \mu(x) \in B\}$. By the change of variable theorem for induced measures,
\begin{eqnarray}\label{eq:dpgp}
\int_{0}^{1}\phi_{\sigma}(y-\mu(x))dx = \int \phi_{\sigma}(y-t) d\nu_{\mu}(t),
\end{eqnarray}
so that $f_{\mu, \sigma}$ can be expressed as a kernel mixture form 
with mixing distribution $\nu_{\mu}$. It turns out that this mechanism of creating random distributions is very general. Depending on the choice of $\mu$, one can create a large variety of mixing distributions based on this specification. For example, if $\mu$ is a strictly monotone function, then $\nu_{\mu}$ is absolutely continuous with respect to the Lebesgue measure, while choosing $\mu$ to be a step function, one obtains a discrete mixing distribution. However, it is easier to place a prior on $\mu$ supported on the space of continuous functions $C[0, 1]$ without further shape restrictions and Theorem \ref{thm:support} assures us that this specification leads to large $L_1$ support on the space of densities.

Suppose the prior $\Pi_{\mu}$ on $\mu$ has full sup-norm support on $C[0,1]$ so that $\mbox{Pr}(\norm{\mu - \mu^*}_{\infty} < \epsilon) > 0$ for any $\epsilon > 0$ and $\mu^* \in C[0,1]$, and the prior $\Pi_{\sigma}$ on $\sigma$ has full support on $[0, \infty)$. If $f_0$ is compactly supported so that the quantile function $\mu_0 \in C[0,1]$, then it can be shown that under mild conditions, the induced prior $\Pi$ assigns positive mass to arbitrarily small $L_1$ neighborhoods of any density $f_0$. We summarize the above discussion in the following theorem, with a proof provided in the appendix.

\begin{thm}\label{thm:support}
If $\Pi_{\mu}$ has full sup-norm support on $C[0,1]$ and $\Pi_{\sigma}$ has full support on $[0, \infty)$, then the $L_1$ support of the induced prior $\Pi$ on $\mathcal{F}$ contains all densities $f_0$ which have a finite first moment and are non-zero almost everywhere on their support.
\end{thm}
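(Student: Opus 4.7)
The plan is to show that for any $\epsilon > 0$, the prior $\Pi$ assigns positive mass to the $L_1$-ball $\{f \in \mathcal{F}: \|f - f_0\|_1 < \epsilon\}$. I proceed through a chain of four approximations: truncate $f_0$ to a compact window, mollify by convolution with $\phi_\sigma$, exploit sup-norm continuity of $(\mu,\sigma) \mapsto f_{\mu,\sigma}$ in $L_1$, and invoke the full-support hypotheses on the two priors. Throughout I use the identity $f_{\mu_0,\sigma} = \phi_\sigma * f_0$ from equation (\ref{eq:conv}) with $\mu_0 = F_0^{-1}$.

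First, using the finite first moment, choose $K > 0$ so large that $\int_{|y|>K} f_0 < \epsilon/8$; then the normalized truncation $\tilde f_0(y) = f_0(y)\ind_{[-K,K]}(y)/Z$ with $Z = \int_{-K}^{K} f_0$ satisfies $\|\tilde f_0 - f_0\|_1 = 2(1-Z) < \epsilon/4$. Because $f_0$ is non-zero almost everywhere on its support, $\tilde F_0$ is strictly increasing and continuous on its support, so the quantile map $\tilde\mu_0 = \tilde F_0^{-1} : [0,1] \to [-K,K]$ is continuous and bounded, i.e., $\tilde\mu_0 \in C[0,1]$. By (\ref{eq:conv}), $f_{\tilde\mu_0,\sigma} = \phi_\sigma * \tilde f_0$, and the standard $L_1$ mollification estimate gives $\|\phi_\sigma * \tilde f_0 - \tilde f_0\|_1 \to 0$ as $\sigma \to 0$; pick $\sigma_0 > 0$ making this quantity smaller than $\epsilon/4$.

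Next, I would establish continuity of $\mu \mapsto f_{\mu,\sigma_0}$ in the $L_1$ metric via Fubini and the elementary bound $\int|\phi_\sigma(y-a) - \phi_\sigma(y-b)|\,dy \leq c|a-b|/\sigma$, which yields
$$\|f_{\mu,\sigma_0} - f_{\tilde\mu_0,\sigma_0}\|_1 \leq \int_0^1 \int |\phi_{\sigma_0}(y-\mu(x)) - \phi_{\sigma_0}(y-\tilde\mu_0(x))|\,dy\,dx \leq (c/\sigma_0)\,\|\mu - \tilde\mu_0\|_\infty,$$
and a similar estimate controls the dependence on $\sigma$ in a neighborhood of $\sigma_0$. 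Pick $\delta,\eta>0$ so that $\|\mu-\tilde\mu_0\|_\infty<\delta$ and $|\sigma-\sigma_0|<\eta$ force each of these discrepancies below $\epsilon/4$. The full sup-norm support of $\Pi_\mu$ at $\tilde\mu_0 \in C[0,1]$ gives $\Pi_\mu(\|\mu-\tilde\mu_0\|_\infty<\delta)>0$, and the full support of $\Pi_\sigma$ on $[0,\infty)$ gives $\Pi_\sigma(|\sigma-\sigma_0|<\eta)>0$. Independence of $\Pi_\mu$ and $\Pi_\sigma$ combined with the triangle inequality over the four $\epsilon/4$ pieces then shows that $\Pi(\|f_{\mu,\sigma}-f_0\|_1<\epsilon)>0$.

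The main obstacle is Step 1: if $\mbox{supp}(f_0)$ is unbounded, the unrestricted quantile $\mu_0 = F_0^{-1}$ does \emph{not} lie in $C[0,1]$ (it blows up near $0$ and $1$), so the full sup-norm support of $\Pi_\mu$ cannot be applied directly to $\mu_0$; the finite first moment hypothesis is what licenses truncation of $f_0$ with negligible $L_1$ cost and replaces $\mu_0$ by a bounded continuous $\tilde\mu_0$ inside the topological support of $\Pi_\mu$, while the a.e.-positivity hypothesis is what makes $\tilde F_0$ continuously invertible on $[0,1]$.
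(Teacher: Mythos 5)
Your overall strategy matches the paper's closely: use the finite first moment to deal with possible unboundedness of the quantile function, mollify with $\phi_\sigma$, exploit $L_1$-Lipschitz continuity of $(\mu,\sigma)\mapsto f_{\mu,\sigma}$, and finish with the full-support hypotheses. The difference is in how you get a surrogate quantile inside the topological support of $\Pi_\mu$. The paper notes that the finite first moment already gives $\mu_0 \in L_1[0,1]$, picks a continuous $\tilde\mu_0$ with $\|\mu_0-\tilde\mu_0\|_1<\delta$ by density of $C[0,1]$ in $L_1[0,1]$, and controls $\|f_{\mu_0,\sigma}-f_{\tilde\mu_0,\sigma}\|_1 \precsim \|\mu_0-\tilde\mu_0\|_1/\sigma$ directly. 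You instead truncate the density to $[-K,K]$ and claim the truncated quantile $\tilde\mu_0 = \tilde F_0^{-1}$ already lies in $C[0,1]$.

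That claim is where the gap is. The theorem only assumes $f_0$ is nonzero a.e.\ \emph{on its support}; the support need not be an interval. If $\operatorname{supp}(f_0)$ has a gap, e.g.\ $[0,1]\cup[2,3]$, the CDF $\tilde F_0$ has a flat stretch on the gap, and the generalized inverse $\tilde F_0^{-1}:[0,1]\to\mathbb{R}$ jumps there, so $\tilde\mu_0\notin C[0,1]$ even after truncation. Your last paragraph asserts that a.e.-positivity makes $\tilde F_0$ \emph{continuously} invertible; that is only true when the support is connected. The hypothesis guarantees strict monotonicity of $F_0$ restricted to the support, not continuity of $F_0^{-1}$ on all of $[0,1]$. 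The fix is exactly the paper's move: after noting $\tilde\mu_0$ (or $\mu_0$ itself) is in $L_1[0,1]$, approximate it in $L_1$ by a genuinely continuous function and use the $L_1$ (not sup-norm) version of your bound $\|f_{\mu,\sigma}-f_{\tilde\mu_0,\sigma}\|_1 \le (c/\sigma)\|\mu-\tilde\mu_0\|_1$, which you already derive as an intermediate step. Once you do that, the truncation step becomes redundant and your argument collapses to the paper's. As written, your proof is correct only under the extra (unstated) hypothesis that $\operatorname{supp}(f_0)$ is an interval.
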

\begin{rem}
The conditions of Theorem \ref{thm:support} are satisfied for a wide range of Gaussian process priors on $\mu$ (for example, a GP with a squared exponential or Mat\'{e}rn covariance kernel).
\end{rem}
\begin{rem}
When $f_0$ has full support on $\mathbb{R}$, the quantile function $\mu_0$ is unbounded near $0$ and $1$, so that $\norm{\mu_0}_{\infty} = \infty$. However, $\int_{0}^{1} \abs{\mu_0(t)} dt = \int_{\mathbb{R}} \abs{x} f_0(x) dx$, which implies that $\mu_0$ can be identified as an element of $L_1[0,1]$ if $f_0$ has finite first moment. Since $C[0,1]$ is dense in $L_1[0,1]$, the previous conclusion regarding $L_1$ support can be shown to hold in the non-compact case too.
\end{rem}

\section{The main theorem}\label{sec:compact}

We consider the case where $f_0$ satisfies Assumption \ref{ass:1} and Assumption \ref{ass:2}. For $\beta$-H\"{o}lder density $f_0$, we consider density $h_{\beta}$ as defined after expression (\ref{eq:constrction}).  Denote $\mu_0$ the quantile function $F_{h_{\beta}}^{-1}: [0,1] \to [a_0, b_0]$, a continuous monotone function inheriting the smoothness of $h_{\beta}$. Note that $h_{\beta}$ has the same smoothness of $f_0$ based on the construction of $f_j$, therefore with the fundamental theorem of calculus it is easy to see that $\mu_0 \in C^{\beta+1}[0,1]$.

We now mention our choices for the prior distributions $\Pi_{\mu}$ and $\Pi_{\sigma}$.
\begin{ass}\label{ass:1p}
We assume $\mu$ follows a centered and rescaled Gaussian process denoted by $\mbox{GP}(0, c^A)$, where $A$ denotes the rescaled parameter, and assume $A$ is a density satisfying for $a>0$,
\begin{eqnarray}\label{tails}
C_1a^p\exp{(-D_1a \log^q a)} \le g(a) \le C_2a^p\exp{(-D_2a \log^q a)}
\end{eqnarray}
for positive constants $C_1$, $C_2$, $D_1$, $D_2$, nonnegative constant $p$ and $q$, and every sufficiently large $a>0$.
\end{ass}
\begin{ass}\label{ass:2p}
We assume $\sigma \sim \mbox{IG}(a_{\sigma}, b_{\sigma})$.
\end{ass}
Note that contrary to the usual conjugate choice of an inverse-Gamma prior for $\sigma^2$, we have assumed an inverse-Gamma prior for $\sigma$.  This enables one to have slightly more prior mass near zero compared to an inverse-Gamma prior for $\sigma^2$, leading to the optimal rate of posterior convergence. Refer also to \cite{kruijer2010adaptive} for a similar prior choice for the bandwidth of the kernel in discrete location-scale mixture priors for densities.

We state below the main theorem of posterior convergence rates.
 \begin{thm}\label{thm:compact}
If $f_0$ satisfies Assumption \ref{ass:1} and the priors $\Pi_\mu$ and $\Pi_{\sigma}$ are as in
Assumptions \ref{ass:1p} and \ref{ass:2p} respectively, the best obtainable rate of posterior convergence relative to Hellinger metric $h$ is
\begin{eqnarray}\label{eq:optrate}
\epsilon_{n} = \max(\tilde{\epsilon}_n, \bar{\epsilon}_n),
\end{eqnarray}
where $\tilde{\epsilon}_n=n^{-\frac{\beta}{2\beta+1}}(\log n)^{t_1}$, $\bar{\epsilon}_n= n^{-\frac{\beta}{2\beta+1}}(\log n)^{t_2}$, with nonnegative constants $t_1=\beta(2\vee q)/(2\beta +1)$, $t_2= t_1+1$.
\end{thm}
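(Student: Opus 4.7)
\bigskip

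\noindent\textbf{Proof plan.} My overall approach is to verify the three conditions of the general posterior contraction theorem of Ghosal and van der Vaart (prior mass in a Kullback--Leibler neighborhood, plus existence of a sieve with exponentially small complement mass and controlled Hellinger entropy), tailored to the continuous convolution structure $f_{\mu,\sigma} = \int_0^1 \phi_\sigma(\,\cdot - \mu(x))dx$ established in Section~\ref{sec:model}. The rate $\epsilon_n = \max(\tilde\epsilon_n, \bar\epsilon_n)$ then emerges as the larger of the prior-mass rate $\tilde\epsilon_n$ and the entropy/sieve rate $\bar\epsilon_n$, with the additional $\log n$ gap between $t_2$ and $t_1$ reflecting the usual extra log factor purchased when removing a piece of sieve mass.

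\medskip

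\noindent\textbf{Step 1: Approximation and reduction to $(\mu,\sigma)$-space.} Pick $\sigma_n \asymp n^{-1/(2\beta+1)}(\log n)^{s}$ so that $\sigma_n^{\beta}\asymp\tilde\epsilon_n$ (the exponent $s$ will be tuned so the log powers match $t_1$). By the construction after \eqref{eq:constrction}, $h_\beta$ is a density satisfying $\|\phi_\sigma * h_\beta - f_0\|_\infty = O(\sigma^\beta)$ on $[a_0,b_0]$, and by \eqref{eq:conv} we have $\phi_\sigma * h_\beta = f_{\mu_0,\sigma}$ with $\mu_0 = F_{h_\beta}^{-1}\in C^{\beta+1}[0,1]$. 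I will first show that if $\|\mu-\mu_0\|_\infty\le \sigma_n\tilde\epsilon_n$ and $|\sigma/\sigma_n - 1|\le\tilde\epsilon_n$, then the Lipschitz control $|\phi_\sigma(y-\mu)-\phi_\sigma(y-\mu_0)|\precsim \sigma^{-2}\|\mu-\mu_0\|_\infty\,\phi_{c\sigma}(y-\mu_0)$ together with Assumption~\ref{ass:2} (which gives $f_{\mu_0,\sigma_n}\gtrsim f_0$) yields $\|f_{\mu,\sigma}/f_0 - 1\|_\infty \precsim \tilde\epsilon_n$, and therefore $K(f_0, f_{\mu,\sigma})\vee V(f_0,f_{\mu,\sigma})\precsim \tilde\epsilon_n^2$ by the standard bound of Wong and Shen. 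This reduces the prior mass condition to a lower bound on
\begin{equation*}
\Pi_\mu\bigl(\|\mu-\mu_0\|_\infty < \sigma_n\tilde\epsilon_n\bigr)\cdot \Pi_\sigma\bigl(|\sigma-\sigma_n|<\sigma_n\tilde\epsilon_n\bigr).
\end{equation*}
The $\sigma$-factor is $\gtrsim \exp(-C\log n)$ using the inverse-Gamma density of Assumption~\ref{ass:2p}. The $\mu$-factor is handled via the van der Vaart--van Zanten decentering theorem for the rescaled GP $\mathrm{GP}(0,c^A)$: since $\mu_0 \in C^{\beta+1}[0,1]$ and the rescaling density satisfies \eqref{tails}, the resulting concentration function around $\mu_0$ delivers the bound $\ge \exp(-c_1 n\tilde\epsilon_n^2)$ with the precise log power $t_1 = \beta(2\vee q)/(2\beta+1)$.

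\medskip

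\noindent\textbf{Step 2: Sieve construction, complement mass, and entropy.} Following the rescaled-GP machinery, define the sieve
\begin{equation*}
\mathcal{F}_n = \bigl\{ f_{\mu,\sigma} : \mu \in M_n + \epsilon_n\mathbb{B}_1,\ \sigma\in[\sigma_n/2,\sigma_n^{-1}]\bigr\},
\qquad
M_n = \bigcup_{a\le a_n} \bigl(r_n \mathbb{H}_1^{a} + \epsilon_n \mathbb{B}_1\bigr),
\end{equation*}
with $a_n$, $r_n$ chosen from the Borell inequality so that $\Pi_\mu(\mu\notin M_n)\le \exp(-(C+4)n\epsilon_n^2)$; the tails \eqref{tails} on $A$ and the inverse-Gamma tails of $\sigma$ control the probabilities of $\{A>a_n\}$ and $\{\sigma\notin[\sigma_n/2,\sigma_n^{-1}]\}$ similarly. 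To turn entropy in $(\mu,\sigma)$-space into entropy in $f$-space, I will prove the Hellinger Lipschitz bound
\begin{equation*}
h(f_{\mu_1,\sigma_1},f_{\mu_2,\sigma_2}) \precsim \sigma_1^{-1}\|\mu_1-\mu_2\|_\infty + \bigl|\log(\sigma_1/\sigma_2)\bigr|,
\end{equation*}
obtained by integrating the pointwise Lipschitz estimate for $\phi_\sigma(y-\mu(x))$ in both arguments and applying $h^2\le \|\cdot\|_1$. The $\bar\epsilon_n$-Hellinger entropy of $\mathcal{F}_n$ is then controlled by the sup-norm entropy of $M_n$ times a polynomial factor in $\log n$. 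The standard RKHS entropy bound for the squared-exponential GP and the choices $a_n,r_n$ from the decentering argument give $\log N(\bar\epsilon_n,\mathcal{F}_n,h)\precsim n\bar\epsilon_n^2$, with the exponent $t_2 = t_1+1$ accounting for the extra $\log n$ needed to simultaneously discretize $\sigma$ and absorb the $\sigma_n^{-1}$ loss in the Lipschitz constant.

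\medskip

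\noindent\textbf{Step 3: Combination and obstacles.} With Steps~1 and 2 in hand the standard theorem yields $\Pi(\{f : h(f,f_0) > M_n\epsilon_n\}\mid Y_{1:n})\to 0$ in $P_0^n$-probability for $\epsilon_n = \max(\tilde\epsilon_n,\bar\epsilon_n)$. The main technical difficulty I anticipate is the Hellinger Lipschitz bound and the KL-to-sup-norm passage, because, unlike the discrete mixture setting treated in Kruijer et al., the mixing measure $\nu_\mu$ is induced by an arbitrary continuous $\mu$, so the standard bounds comparing two location mixtures do not apply directly; the key is to exploit that $f_{\mu_0,\sigma}$ is bounded below on $\mathrm{supp}(f_0)$ by Assumption~\ref{ass:2} and that perturbations of $\mu$ in sup-norm produce locally coherent shifts of each mass point. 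A secondary subtlety is the calibration of $\sigma_n$ and $a_n$ so that the logarithmic factors produced by the GP concentration function with rescaling density \eqref{tails} are exactly $t_1$ for the prior mass step and $t_2 = t_1+1$ for the sieve step; this matches the calibration used in Section~\ref{sec:auxiliary} and propagates directly to the stated rate \eqref{eq:optrate}.
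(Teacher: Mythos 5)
Your overall architecture — the Ghosal--Ghosh--van der Vaart three conditions, a sieve built from rescaled-GP RKHS balls with $\sigma$ restricted to a shrinking/growing interval, $L_1$ Lipschitz control in $(\mu,\sigma)$ to transfer RKHS entropy to density entropy — is the same as the paper's, and your Step~2 and Step~3 match the paper's proof essentially line for line. The gap is in Step~1.

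You propose to establish $\|f_{\mu,\sigma}/f_0 - 1\|_\infty \precsim \tilde\epsilon_n$ from the pointwise Lipschitz estimate $|\phi_\sigma(y-\mu)-\phi_\sigma(y-\mu_0)|\precsim \sigma^{-2}\|\mu-\mu_0\|_\infty\,\phi_{c\sigma}(y-\mu_0)$ together with $\|\mu-\mu_0\|_\infty\le\sigma_n\tilde\epsilon_n$. But integrate that estimate over $x$ and divide by $f_0$ (using Assumption~\ref{ass:2} and Lemma~\ref{lem:convolution} so that $f_{\mu_0,c\sigma}\asymp f_0$): you get $\|f_{\mu,\sigma}/f_0 - f_{\mu_0,\sigma}/f_0\|_\infty \precsim \sigma_n^{-2}\cdot\sigma_n\tilde\epsilon_n = \sigma_n^{-1}\tilde\epsilon_n$, not $\tilde\epsilon_n$. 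Since $\sigma_n^{-1}\tilde\epsilon_n\asymp\sigma_n^{\beta-1}$ while the target is $\sigma_n^{\beta}$, this loses a full power of $\sigma_n$ and does not give $K(f_0,f_{\mu,\sigma})\precsim\tilde\epsilon_n^2$; for $\beta<1$ it does not even vanish. This is precisely the obstruction the paper's Remark~\ref{rem:...} (after Lemma~\ref{lem:hellinger}) flags: bounds that are \emph{linear} in $\|\mu_1-\mu_2\|_\infty/\sigma$ (which is what any sup-norm or $L_1$ route recovers) "do not suffice for obtaining the optimal rate."

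The paper's actual argument never bounds the two-sided sup-norm ratio. It decomposes $h^2(f_0,f_{\mu,\sigma})\precsim h^2(f_0,f_{\mu_0,\sigma}) + h^2(f_{\mu_0,\sigma},f_{\mu,\sigma})$. The first term is handled by the approximation Lemmas~\ref{lem:convolution}--\ref{lem:KL}. For the second term, the key is Lemma~\ref{lem:hellinger}, which exploits the integral structure of $f_{\mu,\sigma}$ (H\"older's inequality inside the convolution plus Fubini) to give the \emph{quadratic} bound $h^2(f_{\mu_0,\sigma},f_{\mu,\sigma})\precsim\|\mu-\mu_0\|_\infty^2/\sigma^2$, which with $\|\mu-\mu_0\|_\infty\precsim\sigma_n^{\beta+1}$ yields exactly $\tilde\epsilon_n^2$ with no extra $\sigma_n^{-1}$ loss. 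The passage from Hellinger to $K$ and $V$ then uses Lemma 8 of Ghosal--van der Vaart, which only requires a \emph{one-sided} control $\log\|f_0/f_{\mu,\sigma}\|_\infty\le C$ (Lemma~\ref{lem:logsup}), not a two-sided ratio bound. Your proposal needs to be restructured to mirror this: derive the sharp Hellinger bound between $f_{\mu_0,\sigma}$ and $f_{\mu,\sigma}$, and pair it with the one-sided lower bound on $f_{\mu,\sigma}/f_0$, rather than attempting a uniform bound on the ratio.
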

Unlike the treatment in discrete mixture models \citep{ghosal2007posterior} where a compactly supported density is approximated with a discrete mixture of normals, the main trick here is to approximate the true density $f_0$ by the convolution $\phi_{\sigma}*f_0$ and allow the prior on the transfer function to appropriately concentrate around the true quantile function $\mu_0 \in C[0,1]$.

\section{Auxiliary results}\label{sec:auxiliary}
To guarantee that the above scheme leads to the optimal rate of convergence, we first derive sharp bounds for the Hellinger distance between $f_{\mu_1, \sigma_1}$ and $f_{\mu_2, \sigma_2}$ for $\mu_1, \mu_2 \in C[0, 1]$ and $\sigma_1, \sigma_2 > 0$. We summarize the result in the following Lemma \ref{lem:hellinger}.
\begin{lem}\label{lem:hellinger}
For $\mu_1, \mu_2 \in C[0, 1]$ and $\sigma_1, \sigma_2 > 0$,
\begin{eqnarray}
h^2(f_{\mu_1,\sigma_1}, f_{\mu_2, \sigma_2}) \leq 1- \sqrt{\frac{2\sigma_1\sigma_2}{\sigma_1^2 + \sigma_2^2}}\exp\bigg\{-\frac{\norm{\mu_1 - \mu_2}_{\infty}^2}{4(\sigma_1^2 + \sigma_2^2)}\bigg\}.
\end{eqnarray}
\end{lem}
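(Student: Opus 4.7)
The natural route is through the Bhattacharyya (Hellinger) affinity $\rho(f,g) = \int \sqrt{fg}\,d\lambda$, since $h^2(f,g)$ is an affine function of $\rho(f,g)$. The plan is to exhibit $f_{\mu_i,\sigma_i}$ as marginals of joint densities on $[0,1]\times\mathbb{R}$ and invoke the monotonicity of the affinity under marginalization, which reduces the problem to a pointwise computation involving only two Gaussians.

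Concretely, I would first set up the coupling: for $i=1,2$, define the joint density $p_i(x,y) = \phi_{\sigma_i}(y-\mu_i(x))\,\ind_{[0,1]}(x)$ on $[0,1]\times\mathbb{R}$, arising from $X\sim \mbox{U}(0,1)$ and $Y\mid X\sim \Gauss(\mu_i(X),\sigma_i^2)$. By definition of $f_{\mu_i,\sigma_i}$ in equation (\ref{eq:gpt_def}), the $y$-marginal of $p_i$ is exactly $f_{\mu_i,\sigma_i}$. The Bhattacharyya affinity is non-increasing under such marginalization, which I would prove in a single line from Cauchy--Schwarz: for each $y$,
\begin{equation*}
\sqrt{f_{\mu_1,\sigma_1}(y)\,f_{\mu_2,\sigma_2}(y)} \;=\; \sqrt{\int_0^1 p_1(x,y)\,dx \cdot \int_0^1 p_2(x,y)\,dx} \;\geq\; \int_0^1 \sqrt{p_1(x,y)\,p_2(x,y)}\,dx.
\end{equation*}
Integrating over $y$ and swapping the order by Fubini, I obtain
\begin{equation*}
\rho(f_{\mu_1,\sigma_1},f_{\mu_2,\sigma_2}) \;\geq\; \int_0^1 \bigl[\,\textstyle\int \sqrt{\phi_{\sigma_1}(y-\mu_1(x))\,\phi_{\sigma_2}(y-\mu_2(x))}\,dy\,\bigr]\,dx.
\end{equation*}

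The inner integral is the classical Hellinger affinity between $\Gauss(\mu_1(x),\sigma_1^2)$ and $\Gauss(\mu_2(x),\sigma_2^2)$, which admits the closed form
\begin{equation*}
\sqrt{\frac{2\sigma_1\sigma_2}{\sigma_1^2+\sigma_2^2}}\,\exp\!\left\{-\frac{(\mu_1(x)-\mu_2(x))^2}{4(\sigma_1^2+\sigma_2^2)}\right\}.
\end{equation*}
This is an easy calculation by completing the square in the Gaussian exponent. The final step is to bound $(\mu_1(x)-\mu_2(x))^2 \leq \|\mu_1-\mu_2\|_\infty^2$ uniformly in $x$, so the exponential factor can be pulled out of the $dx$ integral, and the remaining $\int_0^1 dx = 1$. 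Translating back through $h^2 = 2 - 2\rho$ (or the normalization convention used in the excerpt) yields the stated bound.

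There is no real obstacle: the argument is a clean two-step reduction (marginalization inequality, then Gaussian Hellinger formula), and the only place care is needed is in making the direction of the Cauchy--Schwarz inequality explicit, so that we are indeed obtaining a \emph{lower} bound on the affinity and hence an \emph{upper} bound on $h^2$. The uniform replacement of $(\mu_1(x)-\mu_2(x))^2$ by $\|\mu_1-\mu_2\|_\infty^2$ is responsible for the somewhat crude $L^\infty$ dependence, but it is exactly what is needed later when small-ball probabilities of the Gaussian process prior in $C[0,1]$ are invoked.
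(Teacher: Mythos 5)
Your proof is correct and is essentially the paper's proof: the Cauchy--Schwarz step establishing that the affinity is non-increasing under marginalization is exactly the H\"older-inequality step the paper uses to bound $f_{\mu_1,\sigma_1}(y)f_{\mu_2,\sigma_2}(y)$ from below, and the Fubini swap, the closed-form Gaussian affinity, and the final sup-norm bound all match. The ``marginal of a coupling'' framing is a cleaner way to present the same computation, but the mathematical content is identical.
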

\begin{proof}
Note that by H\"{o}lder's inequality,
\begin{eqnarray*}
f_{\mu_1, \sigma_1}(y)f_{\mu_2, \sigma_2}(y) \geq \bigg\{\int_{0}^{1} \sqrt{\phi_{\sigma_1}(y - \mu_1(x))} \sqrt{\phi_{\sigma_2}(y - \mu_2(x))}dx\bigg\}^2.
\end{eqnarray*}
Hence,
\begin{eqnarray*}
h^2(f_{\mu_1,\sigma_1}, f_{\mu_2, \sigma_2}) &\leq& \int\bigg[\int_{0}^{1}\phi_{\sigma_1}(y-\mu_1(x))dx  + \int_{0}^{1}\phi_{\sigma_2}(y-\mu_2(x))dx \\
&-&2\int_{0}^{1}\sqrt{\phi_{\sigma_1}(y - \mu_1(x))} \sqrt{\phi_{\sigma_2}(y - \mu_2(x))}dx\bigg]dy.
\end{eqnarray*}
By changing the order of integration (applying Fubini's theorem since the function within the integral is jointly integrable) we get,
\begin{eqnarray*}
h^2(f_{\mu_1,\sigma_1}, f_{\mu_2, \sigma_2}) &\leq& \int_{0}^{1}h^2(f_{\mu_1(x),\sigma_1}, f_{\mu_2(x), \sigma_2})dx \\
&=& \int_{0}^{1}\bigg[1- \sqrt{\frac{2\sigma_1\sigma_2}{\sigma_1^2 + \sigma_2^2}}\exp\bigg\{-\frac{(\mu_1(x) - \mu_2(x))^2}{4(\sigma_1^2 + \sigma_2^2)}\bigg\}\bigg]dx\\
&\leq& 1- \sqrt{\frac{2\sigma_1\sigma_2}{\sigma_1^2 + \sigma_2^2}}\exp\bigg\{-\frac{\norm{\mu_1 - \mu_2}_{\infty}^2}{4(\sigma_1^2 + \sigma_2^2)}\bigg\}.
\end{eqnarray*}

\end{proof}

\begin{rem}
When $\sigma_1 = \sigma_2  = \sigma$, $h^2(f_{\mu_1,\sigma}, f_{\mu_2, \sigma}) \leq 1 - \exp\big\{\norm{\mu_1 -\mu_2}_{\infty}^2/ 8 \sigma^2\big\}$, which implies that $h^2(f_{\mu_1,\sigma}, f_{\mu_2, \sigma}) \precsim \norm{\mu_1 -\mu_2}_{\infty}^2/\sigma^2$.
\end{rem}

\begin{rem}
The standard inequality $h^2(f_{\mu_1,\sigma_1}, f_{\mu_2, \sigma_2}) \leq \norm{f_{\mu_1,\sigma_1}- f_{\mu_2, \sigma_2}}_1$ relating the Hellinger distance to the total variation distance leads to the cruder bound
\begin{eqnarray*}
h^2(f_{\mu_1,\sigma_1}, f_{\mu_2, \sigma_2}) \leq C_1 \frac{\norm{\mu_1 -\mu_2}_{\infty}}{(\sigma_1 \wedge \sigma_2)} + C_2\frac{|\sigma_2 - \sigma_1|}{(\sigma_1 \wedge \sigma_2)},
 \end{eqnarray*}
which is linear in $\norm{\mu_1 -\mu_2}_{\infty}$. This bound is less sharp than what is obtained in Lemma \ref{lem:hellinger} and does not suffice for obtaining the optimal rate of convergence.
\end{rem}

To control the Kullback-Leibler divergence between the true density $f_0$ and the model $f_{\mu, \sigma}$, we derive an upper bound for $\log \norm{\frac{f_0}{f_{\mu, \sigma}}}_{\infty}$ in Lemma \ref{lem:logsup}.
\begin{lem}\label{lem:logsup}
If $f_0$ satisfies Assumption \ref{ass:2},
\begin{eqnarray}
\log \norm{\frac{f_0}{f_{\mu, \sigma}}}_{\infty} \leq C + \frac{\norm{\mu - \mu_0}_{\infty}^2}{\sigma^2}
\end{eqnarray}
for some constant $C > 0$.
\end{lem}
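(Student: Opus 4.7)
The plan is to prove the pointwise lower bound $f_{\mu,\sigma}(y) \geq C_0 \exp\{-\norm{\mu-\mu_0}_\infty^2/\sigma^2\}\, f_0(y)$ uniformly in $y \in \mathbb{R}$, from which the claim follows by inverting, taking the supremum in $y$, and passing to logarithms with $C = -\log C_0$. I would carry this out in two steps: first a purely Gaussian-kernel calculation reducing $f_{\mu,\sigma}$ to $f_{\mu_0,\sigma/\sqrt 2}$, then a comparison of the latter with $f_0$ using Assumption \ref{ass:2} and the construction of $h_\beta$.

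For the first step, I would apply the elementary inequality $(y-\mu(x))^2 \leq 2(y-\mu_0(x))^2 + 2(\mu(x)-\mu_0(x))^2$ inside the exponent of each Gaussian kernel in the integral representation (\ref{eq:gpt_def}) of $f_{\mu,\sigma}$. After a routine manipulation that identifies $\phi_{\sigma/\sqrt{2}}(y-\mu_0(x))$ on the right-hand side, this yields
\[
\phi_\sigma(y-\mu(x)) \geq \tfrac{1}{\sqrt{2}} \exp\bigg\{-\tfrac{(\mu(x)-\mu_0(x))^2}{\sigma^2}\bigg\}\, \phi_{\sigma/\sqrt{2}}(y-\mu_0(x)).
\]
Upper-bounding $(\mu(x)-\mu_0(x))^2$ by $\norm{\mu-\mu_0}_\infty^2$ and integrating over $x \in [0,1]$ gives
\[
f_{\mu,\sigma}(y) \geq \tfrac{1}{\sqrt{2}}\, \exp\bigg\{-\tfrac{\norm{\mu-\mu_0}_\infty^2}{\sigma^2}\bigg\}\, f_{\mu_0,\, \sigma/\sqrt{2}}(y).
\]

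For the second step, using the convolution identity (\ref{eq:conv}) with $h_\beta$ in place of $f_0$ I would rewrite $f_{\mu_0,\,\sigma/\sqrt{2}} = \phi_{\sigma/\sqrt{2}} * h_\beta$ and show $\phi_{\sigma/\sqrt{2}} * h_\beta \geq C_2 f_0$ uniformly in $y$. Two ingredients combine here. First, Assumption \ref{ass:2} provides the multiplicative bound $\phi_\sigma * f_0 \geq C_1 f_0$ uniformly for $\sigma$ below a fixed threshold (the statement noted directly after Assumption \ref{ass:2}), inherited from the monotonicity of $f_0$ on $[a_0,a]$ and $[b,b_0]$ and its positivity on $[a,b]$. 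Second, the iterative construction (\ref{eq:constrction}) produces $h_\beta$ whose convolution $\phi_\sigma * h_\beta$ is pointwise close to $f_0$ on its support, so that the additive discrepancy $\phi_{\sigma/\sqrt 2}*(h_\beta - f_0)$ is absorbed into the multiplicative lower bound from the first ingredient. Combining this with the display from the first step then gives the desired pointwise lower bound on $f_{\mu,\sigma}$.

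I expect the second step to be the main technical obstacle. The usual sup-norm approximation $\norm{\phi_\sigma * h_\beta - f_0}_\infty = O(\sigma^\beta)$ from \citep{kruijer2010adaptive} is only additive and, taken alone, fails to control the ratio $f_0/(\phi_\sigma * h_\beta)$ near $a_0$ or $b_0$ where $f_0$ may vanish. Assumption \ref{ass:2} is crucial: the boundary monotonicity forces $\phi_\sigma * f_0$ to dominate $f_0$ up to a constant even near the endpoints, and a careful inspection of the iterative construction transfers this domination from $\phi_\sigma * f_0$ to $\phi_\sigma * h_\beta$, which is exactly what is needed to fold the additive error into the multiplicative bound.
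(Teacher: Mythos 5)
Your proposal is correct and follows essentially the same route as the paper's proof: the elementary square expansion $(y-\mu(x))^2 \le 2(y-\mu_0(x))^2 + 2(\mu(x)-\mu_0(x))^2$ inside the Gaussian exponent yields $f_{\mu,\sigma} \geq \tfrac{1}{\sqrt 2}\exp\{-\norm{\mu-\mu_0}_\infty^2/\sigma^2\}\, f_{\mu_0,\sigma/\sqrt 2}$, after which Assumption \ref{ass:2} (via Lemma~6 of \cite{ghosal2007posterior}) supplies the multiplicative lower bound $\phi_{\sigma/\sqrt 2}*f_0 \geq C f_0$. For the second step, the concern you flag about passing from $\phi_{\sigma/\sqrt 2}*f_0$ to $\phi_{\sigma/\sqrt 2}*h_\beta$ is best handled not by chasing the additive $O(\sigma^\beta)$ discrepancy but by invoking the pointwise domination $h_\beta \geq \rho f_0$ established in Lemma~\ref{lem:density}, which gives $\phi_{\sigma/\sqrt 2}*h_\beta \geq \rho\,\phi_{\sigma/\sqrt 2}*f_0 \geq \rho C f_0$ in one line; that is precisely the step the paper's displayed proof elides by writing $\phi_{\sigma/\sqrt 2}*f_0$ where one would expect $\phi_{\sigma/\sqrt 2}*h_\beta$ given the definition $\mu_0 = F_{h_\beta}^{-1}$ from Section~\ref{sec:compact}.
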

\begin{proof}
Note that
\begin{eqnarray*}
f_{\mu, \sigma}(y) &=& \frac{1}{\sqrt{2\pi}\sigma}\int_{0}^{1}\exp\bigg\{-\frac{(y-\mu(x))^2}{2\sigma^2}\bigg\}dx\\
&\geq&  \frac{1}{\sqrt{2\pi}\sigma}\int_{0}^{1}\exp\bigg\{-\frac{(y-\mu(x))^2}{\sigma^2}\bigg\}dx \exp\bigg\{-\frac{\norm{\mu-\mu_0}_{\infty}^2}{\sigma^2}\bigg\}\\
&\geq& C \phi_{\sigma/\sqrt{2}} * f_0 (y) \exp\bigg\{-\frac{\norm{\mu-\mu_0}_{\infty}^2}{\sigma^2}\bigg\}\\
&\geq& C f_0(y) \exp\bigg\{-\frac{\norm{\mu-\mu_0}_{\infty}^2}{\sigma^2}\bigg\},
\end{eqnarray*}
where the last inequality follows from Lemma 6 of \cite{ghosal2007posterior} since $f_0$ is compactly supported by Assumption \ref{ass:2}. This provides the desired inequality. 
\end{proof}


\begin{lem}\label{lem:approx}
For $\beta \in (2j, 2j+2], j \ge 0$ and $f_j$ constructed by \ref{eq:constrction}, we have the expression $f_{j} = \sum_{i=0}^{j}(-1)^i {j+1 \choose i+1} \phi_{\sigma}^{(i)} f_0$.
\end{lem}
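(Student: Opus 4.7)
The plan is to prove the identity by induction on $j \geq 0$, treating the recursion in its equivalent form
\begin{equation*}
f_{j+1} \;=\; f_0 \,-\, \Delta_\sigma f_j \;=\; f_j \,+\, f_0 \,-\, \phi_\sigma * f_j,
\end{equation*}
and using the convention $\phi_\sigma^{(0)} * f_0 := f_0$ so that the $i=0$ term of the sum is unambiguous. The base case $j=0$ is then trivial: the right-hand side of the claim reduces to $\binom{1}{1} \phi_\sigma^{(0)} * f_0 = f_0$, matching the starting function of the recursion.

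For the inductive step I would substitute the inductive hypothesis $f_j = \sum_{i=0}^{j} (-1)^i \binom{j+1}{i+1} \phi_\sigma^{(i)} * f_0$ into both occurrences of $f_j$ on the right-hand side. Convolution with $\phi_\sigma$ raises the exponent by one, so that
\begin{equation*}
\phi_\sigma * f_j \;=\; \sum_{i=0}^{j} (-1)^i \binom{j+1}{i+1} \phi_\sigma^{(i+1)} * f_0 \;=\; -\sum_{k=1}^{j+1} (-1)^k \binom{j+1}{k} \phi_\sigma^{(k)} * f_0
\end{equation*}
after the shift $k = i+1$. Collecting coefficients of $\phi_\sigma^{(i)} * f_0$ across the three contributions $f_0$, $f_j$, and $-\phi_\sigma * f_j$, the middle indices $1 \leq i \leq j$ acquire coefficient $(-1)^i \big[\binom{j+1}{i+1} + \binom{j+1}{i}\big]$, which collapses to $(-1)^i \binom{j+2}{i+1}$ by Pascal's rule. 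The boundary $i=0$ contributes $1 + \binom{j+1}{1} = \binom{j+2}{1}$, and the new top index $i = j+1$ contributes $(-1)^{j+1}\binom{j+1}{j+1} = (-1)^{j+1}\binom{j+2}{j+2}$. This is exactly the claimed formula at level $j+1$.

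The argument is essentially combinatorial bookkeeping, and no analytic obstacle is in play; the binomial coefficients fall out of Pascal's identity once the reindexing is done carefully. The one point that requires a clarifying remark is the indexing convention in \eqref{eq:constrction}: although the recursion is stated for $j \geq 1$, the lemma's formula is meant to hold from $j=0$ onwards, which forces $f_0$ in the sum to denote the true density itself (so that $f_1 = 2 f_0 - \phi_\sigma * f_0$, matching the formula). Once this is pinned down, the induction is immediate.
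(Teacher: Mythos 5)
Your proof is correct and takes essentially the same route as the paper's: induction on $j$, substituting the hypothesis into $f_0 + f_j - \phi_\sigma * f_j$, reindexing the convolved sum, and collapsing via Pascal's rule. The only cosmetic difference is that you anchor the induction at $j=0$ rather than $j=1$, and you helpfully flag the indexing convention in \eqref{eq:constrction}; the algebraic core is identical.
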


The proof can be found in Appendix \ref{app:combination}. The expression of $f_j$ as a linear combination of true density and the folded convolutions indicates that $f_j$ is as smooth as $f_0$. One can get infinitely differentiable function by convoluting with the kernel, so for the true density with higher regularity degree, we need add the "smoother" function into the approximation $f_j$ to ensure the approximation error remains of order $O(\sigma^{\beta})$.

\begin{lem}\label{lem:convolution}
For any $\beta > 0$, let $f_0$ satisfy Assumption \ref{ass:1} and \ref{ass:2}, integer $j$ be such that for $\beta \in (2j, 2j+2] $, $f_j$ constructed by (\ref{eq:constrction}). For any constant $L$ and all $x \in [a_0, b_0]$, we have 
\begin{eqnarray}\label{eq:convol}
\phi_{\sigma} * f_{\beta} (x) = f_0(x) (1 + O(D(x)\sigma^{\beta})),
\end{eqnarray}
where
\begin{eqnarray*}
D(x) = \sum_{i=1}^{r} c_i {|l_j(x)|}^{\frac{\beta}{i}} + c_{r+1},
\end{eqnarray*}
for nonnegative constants $c_i, i = 1, \dots, r$, and $c_{r+1}$ a multiple of $L$.
\end{lem}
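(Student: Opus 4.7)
The plan is to exploit the explicit representation from Lemma \ref{lem:approx} together with the Gaussian semigroup property $\phi_\sigma^{(k)} = \phi_{\sigma\sqrt{k}}$ to rewrite
\begin{align*}
\phi_\sigma * f_j(x) = \sum_{k=1}^{j+1}(-1)^{k-1}\binom{j+1}{k}\,\phi_{\sigma\sqrt{k}} * f_0(x).
\end{align*}
Dividing through by $f_0(x)$ and setting $\ell = \log f_0$ reduces the problem to controlling the integrals $I_k(x) := \int \phi_{\sigma\sqrt{k}}(t)\exp[\ell(x-t)-\ell(x)]\,dt$. Using Assumption \ref{ass:1}, I Taylor-expand
\begin{align*}
\ell(x-t)-\ell(x) = \sum_{q=1}^{r}\frac{(-t)^q}{q!}l_q(x) + R(t,x),\qquad |R(t,x)|\le \frac{L|t|^\beta}{r!}.
\end{align*}

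Expanding the exponential and regrouping by powers of $t$, the coefficient of $(-t)^q/q!$ in the degree-$r$ polynomial part equals $f_0^{(q)}(x)/f_0(x)$, which by Fa\`a di Bruno applied to $f_0 = e^\ell$ is the complete Bell polynomial $B_q(l_1(x),\ldots,l_q(x))$. Integrating against $\phi_{\sigma\sqrt{k}}(t)$, odd $t$-moments vanish and $\int t^{2m}\phi_{\sigma\sqrt{k}}(t)dt = (k\sigma^2)^m(2m-1)!!$, yielding
\begin{align*}
I_k(x) = \sum_{m=0}^{\lfloor r/2\rfloor}\frac{(2m-1)!!}{(2m)!}\,k^m\sigma^{2m}\,\frac{f_0^{(2m)}(x)}{f_0(x)} + E_k(x),
\end{align*}
where $|E_k(x)|$ is of order $\sigma^\beta$ with a constant involving $L$ and polynomial growth in $|l_q(x)|$. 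The Gaussian tail controls contributions from large $|t|$, and the tail of $e^{P(t,x)}$ beyond degree $r$ integrates to $O(\sigma^{r+1})=O(\sigma^\beta)$ for small $\sigma$.

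Taking the alternating binomial sum, the coefficient of $\sigma^{2m}$ is proportional to $\sum_{k=1}^{j+1}(-1)^{k-1}\binom{j+1}{k} k^m = (-1)^j(j+1)!\,S(m,j+1)$, where $S(\cdot,\cdot)$ is the Stirling number of the second kind; this vanishes for every $1\le m\le j$. The $m=0$ term sums to $1$ and gives the leading $f_0(x)$. For $\beta\in(2j,2j+2]$ we have $\lfloor r/2\rfloor\le j+1$, so the only potentially surviving Taylor term is at $m=j+1$ (present only when $r=2j+2$), contributing exactly $\sigma^{2(j+1)}=\sigma^\beta$ times a multiple of $f_0^{(2j+2)}(x)/f_0(x)$. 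Together with the integrated remainder, the total deviation from $f_0(x)$ is $O(\sigma^\beta)$ times a polynomial in the $|l_q(x)|$.

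The main obstacle, and the reason for the specific form of $D(x)$, is to cast these residual polynomial coefficients in the stated shape. By Fa\`a di Bruno each surviving coefficient is a sum of monomials $\prod_i|l_i(x)|^{a_i}$ with $\sum_i i\,a_i = q$ for some $q\le r$. Writing such a monomial as $\prod_i\bigl(|l_i(x)|^{q/i}\bigr)^{ia_i/q}$ with weights $p_i = ia_i/q$ summing to one, the weighted AM--GM inequality bounds it by $\sum_i p_i\,|l_i(x)|^{q/i}\le \sum_{i=1}^r |l_i(x)|^{q/i}$. Using $q\le r\le\beta$ together with $|l_i|^{q/i}\le |l_i|^{\beta/i}+1$ then yields the stated $D(x) = \sum_{i=1}^{r} c_i\,|l_i(x)|^{\beta/i} + c_{r+1}$, with $c_{r+1}$ absorbing the H\"older constant $L$ entering through the Taylor remainder. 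This completes the proof strategy.
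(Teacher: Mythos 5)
Your proposal is correct and follows the same overall blueprint as the paper (which in turn follows Kruijer et al.): decompose $\phi_\sigma * f_j$ via Lemma \ref{lem:approx} and the Gaussian semigroup property, Taylor-expand $\log f_0$ under Assumption \ref{ass:1}, integrate against the Gaussian, and observe that the low-order $\sigma^{2m}$ terms cancel across the alternating binomial sum. Where you go beyond the paper is in making the cancellation explicit and rigorous: the paper merely asserts that the $\sigma^2,\ldots,\sigma^{2j}$ terms ``can be canceled out by Lemma \ref{lem:approx},'' whereas you identify the coefficient of $\sigma^{2m}$ with the alternating sum $\sum_{k=1}^{j+1}(-1)^{k-1}\binom{j+1}{k}k^m = (-1)^j (j+1)!\,S(m,j+1)$, which vanishes for $1\le m\le j$ because the Stirling number $S(m,j+1)$ does. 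You also use Fa\`a di Bruno / Bell polynomials to recognise the degree-$q$ Taylor coefficient of $e^{\ell(x-t)-\ell(x)}$ as $f_0^{(q)}(x)/f_0(x)$, and a weighted AM--GM inequality to convert the surviving monomials $\prod_i |l_i|^{a_i}$ (with $\sum i a_i=q\le r$) into the stated form $D(x)=\sum_i c_i |l_i(x)|^{\beta/i}+c_{r+1}$; the paper instead absorbs extra factors of $|y-x|$ and $|l_i|$ into constants using boundedness on the compact support, term by term. Both routes give the same bound; yours buys a clean combinatorial identity for the cancellation and a uniform treatment of the residual coefficients, while the paper's is more direct in the $\beta\le 2$ base case and relies on the reader to extrapolate for $\beta>2$. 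One small caveat worth stating explicitly in your write-up: you need $r+1>\beta$ (which holds since $r=\lfloor\beta\rfloor$) for the tail of the $e^P$ expansion to be $O(\sigma^\beta)$, and the $m=j+1$ Taylor term only survives when $\beta=2j+2$, exactly as you noted parenthetically.
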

\begin{proof}
Following the proof of Lemma 1 in \cite{kruijer2010adaptive}, for any $x, y \in [a_0, b_0]$,
\begin{eqnarray*}
\log{f_0(y)} \le \log{f_0(x)} + \sum_{i=1}^{r} \frac{l_j(x)}{j!}(y-x)^j + L|y-x|^{\beta},
\end{eqnarray*}
\begin{eqnarray*}
\log{f_0(y)} \ge \log{f_0(x)} + \sum_{i=1}^{r} \frac{l_j(x)}{j!}(y-x)^j - L|y-x|^{\beta}.
\end{eqnarray*}
Define
\begin{eqnarray*}
B^u_{f_0,r}(x,y) = \sum_{i=1}^{r} \frac{l_j(x)}{j!}(y-x)^j + L|y-x|^{\beta},
\end{eqnarray*}
\begin{eqnarray*}
B^l_{f_0,r}(x,y) = \sum_{i=1}^{r} \frac{l_j(x)}{j!}(y-x)^j - L|y-x|^{\beta}.
\end{eqnarray*}
Then we have
\begin{eqnarray*}
e^{B^u_{f_0,r}} \le 1 + B^u_{f_0,r} + \frac{1}{2!}(B^u_{f_0,r})^2 + \dots + M |B^u_{f_0,r}|^{r+1},
\end{eqnarray*}
\begin{eqnarray*}
e^{B^l_{f_0,r}} \ge 1 + B^l_{f_0,r} + \frac{1}{2!}(B^l_{f_0,r})^2 + \dots - M |B^l_{f_0,r}|^{r+1}.
\end{eqnarray*}
where 
\begin{eqnarray*}
M = \frac{1}{(r+1)!} \exp \bigg \{\sup_{x, y  \in [a_0, b_0], x \neq y} (|\sum_{j=1}^{r} \frac{l_j(x)}{j!} (y-x)^j| + L|y-x|^{\beta})\bigg \}.
\end{eqnarray*}
Note that $f_0$ is bounded on $[a_0, b_0]$, we consider the convolution on the whole real line by extending $f_0$ analytically outside $[a_0, b_0]$. For $\beta \in (1, 2], r = 1$ and $x \in (a_0, b_0)$, 
\begin{eqnarray*}
\phi_{\sigma}* f_0(x) &\le& f_0(x) \int e^{B^u_{f_0,r}(x, y)} \phi_{\sigma}(y-x) dy\\
&\le& f_0(x) \int_{\mathbb{R}} \phi_{\sigma}(y-x) [ 1 + L|y-x|^{\beta} + M ( l^2_1(x)(y-x)^2 + L^2|y-x|^{2\beta}) ] dy.
\end{eqnarray*}
Since $l_j(x)$'s are all continuous on $[a_0, b_0]$, there exist finite constants $M_j$ such that $|l_j| \le M_j$ and $|y-x| \le |b_0 - a_0|$. The integral in the last inequality can be bounded by 
\begin{eqnarray*}
\int_{\mathbb{R}} \phi_{\sigma}(y-x) [ 1 + L|y-x|^{\beta} + M ((M_1|b_0 - a_0|)^{2-\beta} |l_1(x)(y-x)|^{\beta} + (L^2|b_0 - a_0|^{\beta} )|y-x|^{\beta}) ] dy
\end{eqnarray*}
Therefore,
\begin{eqnarray*}
\phi_{\sigma}* f_0(x) \le f_0(x) ( 1 + (r_1|l_1(x)|^{\beta} +r_2) \sigma^{\beta}),
\end{eqnarray*}
where $ r_1 = M (M_1|b_0 - a_0|)^{2-\beta}\mu_{\beta}, \  \ r_2 =  (L + ML^2)\mu_{\beta}$. \\
In the other direction, 
\begin{eqnarray*}
\phi_{\sigma}* f_0(x) \ge  f_0(x) \int \phi_{\sigma}(y-x) [ 1 - L|y-x|^{\beta} - M ( l^2_1(x)(y-x)^2 + L^2|y-x|^{2\beta}) ] dy.
\end{eqnarray*}
Thus we achieve (\ref{eq:convol}).

For any $\beta > 2$, suppose $\beta \in (2j, 2j +2], j > 1$. First we calculate $\phi_{\sigma}* f_0$, $\phi^{(2)}_{\sigma}* f_0$, $\dots$, $\phi^{(j)}_{\sigma}* f_0(x)$, by Lemma \ref{lem:approx} to get $\phi_{\sigma}* f_{\beta}(x)$. The calculation of $\phi^{(i)}_{\sigma}* f_0(x)$ is the same as $\phi_{\sigma}* f_0(x)$ except taking the convolution with $\phi_{\sqrt{i}\sigma}$. The terms $\sigma^2$, $\sigma^4$, $\dots$, $\sigma^{2j}$ caused by the factors containing $|y-x|^k, k < \beta$ in $\phi^{(i)}_{\sigma} f_0$ can be canceled out by Lemma \ref{lem:approx}.
For terms containing $|y-x|^k, k \ge \beta$, we take out $|y-x|^{\beta}$ and bound the rest by a certain power of $|l_j(x)|$ or some constant. 
\end{proof}

\begin{lem}\label{lem:tail}
Let $f_0$ satisfy Assumption \ref{ass:1} and \ref{ass:2}. With $A_\sigma = \{x: f_0(x) \ge \sigma^{H}\}$ , we have
\begin{eqnarray}\label{eq:tail} 
\int_{A^c_\sigma}f_0(x)dx = O(\sigma^{2\beta}), \  \   \int_{A^c_\sigma} \phi_{\sigma}*f_{j}(x) dx = O(\sigma^{2\beta}),
\end{eqnarray}
for all non-negative integer $j$, sufficiently small $\sigma$ and sufficiently Large $H$.
\end{lem}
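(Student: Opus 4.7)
Proof plan.

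The argument handles the two integrals in turn.

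For the first bound, Assumption \ref{ass:2} localises everything. Outside the compact support $[a_0,b_0]$ the integrand vanishes, so only $A_\sigma^c \cap [a_0,b_0] = \{x \in [a_0,b_0] : f_0(x) < \sigma^H\}$ contributes. By the monotonicity near the endpoints guaranteed by Assumption \ref{ass:2}, this intersection is contained in two sub-intervals adjacent to $a_0$ and $b_0$. Bounding the integrand pointwise by $\sigma^H$ and the Lebesgue measure trivially by $b_0-a_0$ yields
\begin{eqnarray*}
\int_{A_\sigma^c} f_0(x)\,dx \;\le\; (b_0-a_0)\,\sigma^H,
\end{eqnarray*}
and any choice $H \ge 2\beta$ delivers the $O(\sigma^{2\beta})$ rate.

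For the second bound, I would first invoke Lemma \ref{lem:approx} to expand
\begin{eqnarray*}
\phi_\sigma * f_j \;=\; \sum_{i=0}^{j} (-1)^i \binom{j+1}{i+1} \phi_{\sqrt{i+1}\,\sigma} * f_0,
\end{eqnarray*}
reducing the problem to bounding $\int_{A_\sigma^c} \phi_{c\sigma} * f_0(x)\,dx$ for $c \in \{\sqrt{1}, \ldots, \sqrt{j+1}\}$. Fubini rewrites this integral as
\begin{eqnarray*}
\int_{A_\sigma^c} \phi_{c\sigma} * f_0(x)\,dx \;=\; \int f_0(t)\, P\{t + c\sigma Z \in A_\sigma^c\}\,dt, \qquad Z \sim \Gauss(0,1).
\end{eqnarray*}
I would then split the $t$-integral along the threshold $\mathrm{dist}(t, A_\sigma^c) \;=\; c\sigma\sqrt{2H\log(1/\sigma)}$. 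On the \emph{far} side, a standard Gaussian tail bound gives $P\{t + c\sigma Z \in A_\sigma^c\} \le \sigma^H$, so the far contribution is at most $\sigma^H \int f_0 = \sigma^H = O(\sigma^{2\beta})$ once $H \ge 2\beta$. On the \emph{near} side, the tube of radius $c\sigma\sqrt{2H\log(1/\sigma)}$ around $A_\sigma^c \cap [a_0,b_0]$ has Lebesgue measure $O(\sigma\sqrt{\log(1/\sigma)})$; combining this with the Hölder regularity of $f_0$ inherited from Assumption \ref{ass:1} and the pointwise bound $f_0 < \sigma^H$ on $A_\sigma^c$ itself gives
\begin{eqnarray*}
f_0(t) \;\le\; \sigma^H + L\,\bigl(c\sigma\sqrt{\log(1/\sigma)}\bigr)^{\min(\beta,1)}
\end{eqnarray*}
on the near region. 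Multiplying the pointwise bound by the tube measure and choosing $H$ sufficiently large absorbs the polylogarithmic overhead and yields an $O(\sigma^{2\beta})$ near contribution. Summing the $O(j)$ terms from Lemma \ref{lem:approx} preserves the rate.

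The principal obstacle I anticipate is precisely the near region, where neither the Gaussian tail is small nor $f_0$ is uniformly negligible: the proof must pair the pointwise smallness of $f_0$ coming from $f_0 < \sigma^H$ on $A_\sigma^c$ and its Hölder continuity on the enlargement against the small Lebesgue measure of the tube, using the freedom to enlarge $H$ to swallow logarithmic overhead from the Gaussian cutoff and to match the exponent $2\beta$.
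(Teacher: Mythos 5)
Your bound for the first integral matches the paper's: on $A_\sigma^c$ one has $f_0 < \sigma^H$ pointwise and the support has finite length, so the integral is at most $(b_0-a_0)\sigma^H$, and any $H \ge 2\beta$ suffices.

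For the second integral you take a genuinely different route, and there is a real gap in it. The paper does not expand $\phi_\sigma * f_j$, does not use Fubini, and does not use Gaussian tail estimates: it simply invokes Lemma \ref{lem:convolution}, which states that $\phi_\sigma * f_j(x) = f_0(x)\{1 + O(D(x)\sigma^\beta)\}$ on the support with $D$ bounded, so the integrand is a bounded multiple of $f_0$, and the second bound reduces instantly to the first. Your alternative plan (expand via Lemma \ref{lem:approx} so that $\phi_\sigma*f_j$ becomes a signed combination of $\phi_{\sqrt{i+1}\,\sigma}*f_0$, swap the order of integration, and split the source variable $t$ into far and near regions) is a legitimate strategy in outline, but your near-side estimate does not close. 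You bound $f_0$ on the collar by the additive H\"older estimate $f_0(t)\le \sigma^H + L\bigl(c\sigma\sqrt{\log(1/\sigma)}\bigr)^{\min(\beta,1)}$; the second term carries $\sigma$-power $\min(\beta,1)$ independently of $H$, and the collar has Lebesgue measure of order $\sigma\sqrt{H\log(1/\sigma)}$, so the near-side contribution is of order $\sigma^{1+\min(\beta,1)}$ up to polylogarithms. For $\beta>1$ that is $O(\sigma^2\log(1/\sigma))$, not $O(\sigma^{2\beta})$. Increasing $H$ does not help: $H$ enters only through the already negligible $\sigma^H$ and through the collar width $c\sigma\sqrt{2H\log(1/\sigma)}$, where larger $H$ \emph{widens} the tube. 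The missing idea is to exploit the H\"older regularity of $\log f_0$ (Assumption \ref{ass:1}) multiplicatively rather than the induced H\"older regularity of $f_0$ additively: if $t'$ is the nearest boundary point of $A_\sigma^c$ then $f_0(t')=\sigma^H$ and $f_0(t)\le f_0(t')\exp\{L|t-t'|^{\min(\beta,1)}\}\le 2\sigma^H$ for $\sigma$ small, giving a near-side contribution $O(\sigma^{1+H}\sqrt{\log(1/\sigma)})$. Even after that repair, the paper's one-line reduction via Lemma \ref{lem:convolution} is considerably shorter, and is the argument reused downstream in Lemma \ref{lem:KL}.
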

\begin{proof}
Under Assumption \ref{ass:2} there exists $(a, b) \subset [a_0, b_0]$ such that ${A^c_\sigma} \subset [a_0, a) \cup (b, b_0]$ if we choose $\sigma$ sufficiently small, so that $f_0 (x) \le \sigma^{H}$ for $x \in {A^c_\sigma}$. Therefore, $\int_{A^c_\sigma}f_0(x) \le \sigma^{H} |b_0 - a_0| \le O(\sigma^{2\beta})$ if we choose $H \ge 2\beta$. Using Lemma \ref{lem:convolution}, 
\begin{eqnarray*}
\int_{A^c_\sigma} \phi_{\sigma}* f_j(x) dx =  \int_{A^c_\sigma} f_0(x) (1 + O(D(x)\sigma^{\beta})) \le O(\sigma^{H}),
\end{eqnarray*}
with bounded $D(x)$ and $H \ge 2\beta$ it is easy to bound the second integral by $O(\sigma^{2\beta})$.
\end{proof}

\begin{lem}\label{lem:density}
Suppose $f_0(x)$ satisfies Assumption \ref{ass:1} and \ref{ass:2}. For $\beta >2$ and $j$ such that $\beta \in (2j,2j+2]$, we can construct the density $h_{\beta}$ from (\ref{eq:constrction}) and show that $h_{\beta}$ satisfies Lemma \ref{lem:convolution} and Lemma \ref{lem:tail}.
\end{lem}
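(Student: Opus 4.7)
The plan is to verify three things about the auxiliary function $f_j$ produced by the recursion (\ref{eq:constrction}): non-negativity on $[a_0,b_0]$, a normalizing constant that is $1+O(\sigma^\beta)$, and inheritance of Lemmas \ref{lem:convolution} and \ref{lem:tail} after normalization.

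\textbf{Step 1 (non-negativity).} Starting from the explicit expression in Lemma \ref{lem:approx}, $f_j = \sum_{i=0}^j (-1)^i \binom{j+1}{i+1}\phi_\sigma^{(i)} * f_0$, I would apply the same Taylor expansion developed in the proof of Lemma \ref{lem:convolution} to each $\phi_\sigma^{(i)} * f_0$ on $[a_0, b_0]$. Grouping terms and using the binomial identity $\sum_{i=0}^j (-1)^i \binom{j+1}{i+1} = 1$ shows that the leading contribution is $f_0(x)$ with a correction that is uniformly small in $\sigma$; under Assumption \ref{ass:2} ($f_0$ bounded away from zero on $[a,b]$ and monotone on the flanks $[a_0,a]\cup[b,b_0]$), this forces $f_j \geq 0$ on $[a_0,b_0]$ for all sufficiently small $\sigma$.

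\textbf{Step 2 (normalizing constant).} Since $\int_{\mathbb{R}} \phi_\sigma^{(i)} * f_0 = 1$ for every $i$, the same binomial identity gives $\int_{\mathbb{R}} f_j = 1$ exactly. Writing $c_j = \int_{[a_0,b_0]} f_j = 1 - \int_{\mathbb{R} \setminus [a_0,b_0]} f_j$, I would expand the tail integral using the decomposition from Lemma \ref{lem:approx} and a local Taylor expansion of $\phi_{\sqrt{i}\sigma}*f_0$ near the boundary points $a_0$ and $b_0$; the same cancellation among the weighted convolutions that upgrades $\phi_\sigma * f_j$ to a $\sigma^\beta$ approximation of $f_0$ (Lemma \ref{lem:convolution}) must also apply to the boundary contributions, yielding $c_j = 1 + O(\sigma^\beta)$. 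Consequently $h_\beta := f_j/c_j$ is a bona fide probability density on $[a_0,b_0]$.

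\textbf{Step 3 (transfer of the two lemmas).} From $h_\beta = f_j/c_j$, $\phi_\sigma * h_\beta(x) = c_j^{-1}\phi_\sigma * f_j(x)$. Lemma \ref{lem:convolution} applied to $f_j$ gives $\phi_\sigma * f_j(x) = f_0(x)(1 + O(D(x)\sigma^\beta))$, and combining with $c_j^{-1} = 1 + O(\sigma^\beta)$ from Step 2 reproduces the conclusion of Lemma \ref{lem:convolution} verbatim for $h_\beta$. The tail bound in Lemma \ref{lem:tail} for $h_\beta$ then follows at once, since the integrands for $h_\beta$ and $f_j$ differ only by the bounded factor $c_j^{-1}$, and the bound on $\int_{A_\sigma^c} f_0$ is unchanged.

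\textbf{Main obstacle.} The delicate step is Step 2. The individual tail integrals $\int_{\mathbb{R}\setminus[a_0,b_0]} \phi_\sigma^{(i)} * f_0$ are generically of order $\sigma$, not $\sigma^\beta$, so the claim $c_j = 1 + O(\sigma^\beta)$ must rely on the algebraic cancellations produced by the alternating weights $(-1)^i \binom{j+1}{i+1}$. These are precisely the weights engineered in Lemma \ref{lem:approx} to annihilate the Taylor coefficients of orders $\sigma^2,\sigma^4,\dots,\sigma^{2j}$; the task is to mirror the interior Taylor calculation of Lemma \ref{lem:convolution} at the two boundary points and check that the same cancellations still occur after integrating against the one-sided Gaussian tails, pulling the boundary contribution from $O(\sigma)$ down to $O(\sigma^\beta)$.
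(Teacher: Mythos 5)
Your Steps 1 and 3 are correct, and Step 1 in fact reaches the conclusion more directly than the paper's own argument. The paper establishes $f_j > (1 - j\rho_1)f_0$ by iterating the one-step bound $\phi_\sigma*f_{k-1} < (1+\rho_k)f_0$ from Lemma \ref{lem:convolution}, accumulating the $\rho_k$'s across levels; your route of applying the interior Taylor bound termwise to the explicit formula of Lemma \ref{lem:approx} and invoking $\sum_{i=0}^j(-1)^i\binom{j+1}{i+1}=1$ gets the uniform estimate $f_j = f_0(1+O(\sigma^2))$ in a single pass. Step 3 is the same argument as the paper's.

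The problem is Step 2, and while you have correctly sensed that it is the delicate one, the fix you sketch cannot work. The weights $(-1)^i\binom{j+1}{i+1}$ annihilate $\sum_{i=0}^j(-1)^i\binom{j+1}{i+1}i^k$ for $1\le k\le j$, which is exactly what kills the $\sigma^2,\dots,\sigma^{2j}$ terms in the \emph{interior} Taylor expansion, where the error in $\phi_{\sqrt{i}\sigma}*f_0 - f_0$ carries the factor $i^k\sigma^{2k}$. The boundary tail has a qualitatively different dependence on $i$: to leading order $\int_{\mathbb{R}\setminus[a_0,b_0]}\phi_{\sqrt{i}\sigma}*f_0 \asymp \sqrt{i}\,\sigma\,\bigl(f_0(a_0)+f_0(b_0)\bigr)$, and since Assumption \ref{ass:1} makes $\log f_0$ bounded, $f_0(a_0)$ and $f_0(b_0)$ are strictly positive. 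The $i$-dependence here is $\sqrt{i}$, not a nonnegative integer power, so $\sum_{i=1}^j(-1)^i\binom{j+1}{i+1}\sqrt{i}$ is generically nonzero and the $O(\sigma)$ boundary mass does not cancel. Your decomposition therefore stalls at $c_j = 1 + O(\sigma)$, which is not $1+O(\sigma^\beta)$ for any $\beta>1$. The paper avoids exposing this tail entirely: it stays on $[a_0,b_0]$, uses the recursion (\ref{eq:constrction}) to write $\int_{[a_0,b_0]}f_k = 1 + \int_{[a_0,b_0]}f_{k-1} - \int_{[a_0,b_0]}\phi_\sigma*f_{k-1}$, and then bounds the last piece by inserting the pointwise estimate $\phi_\sigma*f_{k-1}(x) = f_0(x)(1+O(D(x)\sigma^\beta))$ of Lemma \ref{lem:convolution}, which is proved using the analytic extension of $f_0$ outside $[a_0,b_0]$ so that the boundary jump never surfaces in the bookkeeping. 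If you wish to keep the identity $\int_{\mathbb{R}}f_j=1$ as your starting point, you would need an independent $O(\sigma^\beta)$ bound on the tail mass, and the stated assumptions do not deliver one.
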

\begin{proof}
To get the density function we first show that $f_j$ is nonnegative and compute the normalizing constant $\int f_j(x) = 1 + O(\sigma^{\beta})$. Following the proof of Lemma 2 in \cite{kruijer2010adaptive}, we treat $\log f_0$ as a function in $C^2 [0,1]$ and obtain the same form of $\phi_{\sigma}*f_0$ as (\ref{eq:convol}). For small enough $\sigma$ we can find  $\rho_1 \in (0,1)$ very close to $0$ such that
\begin{eqnarray*}
 \phi_{\sigma}* f_0(x) = f_0(x) (1 + O(D^{(2)}(x)\sigma^2)) < f_0(x)(1 + \rho_1),
\end{eqnarray*}
 where $D^{(2)}$ contains $|l_1(x)|$ and $|l_2(x)|$ with certain power, so $D^{(2)}$ is bounded. Then we have
 \begin{eqnarray*}
 f_1(x) = 2f_0(x) - K_{\sigma} f_0(x) > 2f_0(x) - f_0(x) (1 + \rho_1) = f_0(x) (1 - \rho_1).
 \end{eqnarray*}
 Then we treat $\log{f_0}$ as a function with $\beta = 4$, $j = 1$. Similarly, we can get
 \begin{eqnarray*}
 \phi_{\sigma}* f_1(x) = f_0(x) (1 + O(D^{(4)}(x)\sigma^4)),
 \end{eqnarray*}
 where $D^{(4)}$ contains $|l_1(x)|, \dots. |l_4(x)|$. We can find $0 < \rho_2 < \rho_1$ such that $\phi_{\sigma}* f_1(x) < f_0(x) (1 + \rho_2)$, then can get
 \begin{eqnarray*}
 f_2 (x) = f_0(x) - (\phi_{\sigma}* f_1(x) - f_1(x)) > f_0(x)(1 - \rho_1 - \rho_2) > f_0(x)(1 - 2\rho_1).
 \end{eqnarray*}
 Continuing this procedure, we can get $f_j(x) > f_0(x) (1 - j\rho_1)$, with sufficiently small $\sigma$, and $1 - j\rho_1 \in (0,1)$ but very close to 1. Obviously $f_j$ is nonnegative. \\
Now we calculate the normalizing constant for $f_j$. When $\beta < 2$, with Lemma \ref{lem:approx},
\begin{eqnarray*}
\int f_1(x) = \int f_0 - (\phi_{\sigma}* f_0 - f_0) \le \int f_0 + |\int (\phi_{\sigma}*f_0 - f_0)| \le 1 + O(\sigma^{\beta}). 
\end{eqnarray*}
For $\beta \in (2,4]$, 
\begin{eqnarray*}
\int f_2(x) = \int f_0 -  (\phi_{\sigma}* f_1 - f_1) \le \int f_1 + |\int (\phi_{\sigma}*f_1 - f_0)| \le 1 + O(\sigma^{\beta}).
 \end{eqnarray*}
 Then by induction, we have $\int f_j = 1 + O(\sigma^{\beta})$, so that we have the density
\begin{eqnarray}\label{eq:density}
h_{\beta}= \frac{f_j}{1 + O(\sigma^{\beta})}, \quad \beta \in (2j, 2j+2]. 
\end{eqnarray}
 Now to show $h_{\beta}$ satisfying (\ref{eq:convol}), note that 
 \begin{eqnarray*}
 \phi_{\sigma}* h_{\beta} = \frac{\phi_{\sigma}* f_j}{\int f_j} = \frac{f_0(x)(1+ O(D(x)\sigma^{\beta}))}{1+ O(\sigma^{\beta})}.
 \end{eqnarray*}
 Since $D(x)$ is bounded and for sufficiently small $\sigma$ we consider term 
 \begin{eqnarray*}
 \frac{(1+ O(D(x)\sigma^{\beta}))}{1+ O(\sigma^{\beta})} \precsim \frac{(1+ O((D(x)+1)\sigma^{\beta})+ O(D(x)\sigma^{2\beta}))}{1+ O(\sigma^{\beta})} = 1+ O(D(x)\sigma^{\beta}),
 \end{eqnarray*}
which directly leads to the same form as (\ref{eq:convol}), and obviously Lemma \ref{lem:tail} is satisfied.
\end{proof}

\begin{lem}\label{lem:KL}
 Let $f_0$ satisfy Assumption \ref{ass:1} and \ref{ass:2}, and integer $j$ be such that $\beta \in (2j, 2j+2]$. Then we can show that the density $h_{\beta}$ defined by (\ref{eq:density}) satisfies, \begin{eqnarray}\label{eq:KL}
\int f_0(x) \log{\frac{f_0(x)}{\phi_{\sigma}* h_{\beta}(x)}} = O(\sigma^{2\beta}),
\end{eqnarray}
for sufficiently small $\sigma$ and all $x \in [a_0, b_0]$.
\end{lem}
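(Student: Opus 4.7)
The plan is to apply Lemmas \ref{lem:convolution} and \ref{lem:density} to write the ratio $\phi_\sigma * h_\beta / f_0$ as $1 + R(x)$ on $[a_0, b_0]$ with $R$ of uniform order $\sigma^\beta$, Taylor expand $\log(1+R)$, and use the fact that both $f_0$ and $\phi_\sigma * h_\beta$ are probability densities to exploit cancellation in the first-order term. The residual boundary mass required to quantify this cancellation is precisely what Lemma \ref{lem:tail} is designed to control.

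First I would combine Lemma \ref{lem:density} with Lemma \ref{lem:convolution} to obtain $\phi_\sigma * h_\beta(x) = f_0(x)(1 + R(x))$ for all $x \in [a_0, b_0]$, with $|R(x)| \leq C D(x)\sigma^\beta$. Under Assumption \ref{ass:1}, the derivatives $l_1, \ldots, l_r$ are continuous on the compact interval $[a_0, b_0]$, so $D(\cdot)$ is bounded, yielding $\norm{R}_\infty \leq C_1 \sigma^\beta$. For $\sigma$ sufficiently small, $|R(x)| \leq 1/2$ uniformly, so the series $\log(1+R) = R - R^2/2 + R^3/3 - \cdots$ converges uniformly.

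Because $f_0$ vanishes outside $[a_0, b_0]$, the KL integral reduces to
\[
K(f_0, \phi_\sigma * h_\beta) = -\int_{a_0}^{b_0} f_0 \log(1 + R)\,dx = -\int f_0 R + \tfrac{1}{2}\int f_0 R^2 - \tfrac{1}{3}\int f_0 R^3 + \cdots.
\]
For each $k \geq 2$, the $k$-th term is bounded by $\norm{R}_\infty^k \int f_0 / k = O(\sigma^{k\beta})$, and since $|R|\le 1/2$ the tail of the series is also $O(\sigma^{2\beta})$. For the linear term, using that both $f_0$ and $\phi_\sigma * h_\beta$ integrate to one on $\mathbb{R}$,
\[
\int f_0 R = \int_{a_0}^{b_0}\bigl(\phi_\sigma * h_\beta - f_0\bigr)\,dx = -\int_{[a_0,b_0]^c} \phi_\sigma * h_\beta\,dx,
\]
which represents the mass of the convolution that has leaked outside the support of $f_0$. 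Since $f_0 \equiv 0$ off $[a_0, b_0]$, one has $[a_0,b_0]^c \subset A_\sigma^c$; invoking $h_\beta = f_j/(1+O(\sigma^\beta))$ together with Lemma \ref{lem:tail} shows this leaked mass is $O(\sigma^{2\beta})$, hence $|\int f_0 R| = O(\sigma^{2\beta})$, and the claim follows.

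The \textbf{main obstacle} is recognizing and quantifying this cancellation: a naive estimate $|\int f_0 R| \leq \norm{R}_\infty \int f_0 = O(\sigma^\beta)$ would only give $K = O(\sigma^\beta)$, which is far short of the desired $O(\sigma^{2\beta})$. The extra power of $\sigma^\beta$ is recovered only by observing that to leading order both densities carry the same total mass, and that the spillage of $\phi_\sigma * h_\beta$ past the support of $f_0$ is tail behavior of order $\sigma^{2\beta}$ as controlled by Lemma \ref{lem:tail}. This is the reason for introducing the auxiliary construction $h_\beta$ and the set $A_\sigma$ in the first place.
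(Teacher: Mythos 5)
Your proof takes a genuinely different route from the paper's. The paper partitions $[a_0,b_0]$ into $A_\sigma = \{f_0 \ge \sigma^H\}$ and its complement, applies the pointwise $\chi^2$-type bound $f_0\log(f_0/g) \le (f_0-g)^2/g + (f_0-g)$ with $g = \phi_\sigma * h_\beta$ on $A_\sigma$ (after establishing $g \ge K f_0$ there so that the quadratic term is $O(\sigma^{2\beta})$ by Lemma \ref{lem:convolution}), and controls the complement pieces via the lower bound $g \ge C f_0$ and Lemma \ref{lem:tail}. You instead write $g = f_0(1+R)$ on the support, Taylor-expand $-\log(1+R)$, and bound the $k$-th order terms ($k\ge 2$) by $\|R\|_\infty^k = O(\sigma^{k\beta})$; this is a more elementary argument that does not need the $A_\sigma$ split in the quadratic part. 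The real merit of your version is that it makes the first-order cancellation explicit: $\int_{a_0}^{b_0} f_0 R = -\int_{[a_0,b_0]^c}\phi_\sigma * h_\beta$, isolating exactly the quantity that has to be small, namely the mass of the smoothed density leaking beyond the support. In the paper's decomposition the same quantity is present but hidden: expanding the identity $\int_{A_\sigma}(f_0-g) = \int_{(A_\sigma^c\cap[a_0,b_0])}(g-f_0) + \int_{[a_0,b_0]^c}g$ shows that the boundary spillage sits silently inside the paper's third term, $\int_{A_\sigma^c}(\phi_\sigma*h_\beta - f_0)$.

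One caveat worth flagging. Your final step invokes Lemma \ref{lem:tail} via the inclusion $[a_0,b_0]^c \subset A_\sigma^c$, which presupposes both that $A_\sigma^c$ means the complement in $\mathbb{R}$ and that the lemma's second bound, $\int_{A_\sigma^c}\phi_\sigma*f_j = O(\sigma^{2\beta})$, holds over all of $\mathbb{R}\setminus A_\sigma$. The paper's proof of Lemma \ref{lem:tail} in fact establishes this only on $[a_0,b_0]\setminus A_\sigma$: the proof asserts $A_\sigma^c \subset [a_0,a)\cup(b,b_0]$ and then invokes Lemma \ref{lem:convolution}, whose expansion $\phi_\sigma*f_j(x) = f_0(x)(1+O(D(x)\sigma^\beta))$ is stated only for $x\in[a_0,b_0]$. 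So the bound on $\int_{[a_0,b_0]^c}\phi_\sigma * h_\beta$ that your argument needs is not actually supplied by the lemma as proven, and a separate estimate of the Gaussian spillage past the endpoints is required. To be fair, the paper's own proof of Lemma \ref{lem:KL} quietly relies on the same unsupplied ingredient, so this is an inherited ambiguity rather than a mistake unique to your argument; you have merely made the shared dependence transparent rather than implicit.
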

\begin{proof}
Again consider the set $A_\sigma = \{x: f_0(x) \ge \sigma^{H}\}$ with arbitrarily large $H$. We separate the Kullback-Leibler divergence into
\begin{eqnarray*}
\int_{[a_0, b_0]} f_0 \log{\frac{f_0}{\phi_{\sigma}* h_{\beta}}} \le \int_{A_\sigma} \frac{(f_0 - \phi_{\sigma}*h_{\beta})^2}{\phi_{\sigma}* h_{\beta}} +  \int_{A^c_\sigma} f_0 \log{\frac{f_0}{\phi_{\sigma}* h_{\beta}}} +  \int_{A^c_\sigma}(\phi_{\sigma}*h_{\beta} - f_0). 
\end{eqnarray*}
Under Assumption \ref{ass:2} and by Remark 3 in \cite{ghosal1999posterior}, for small enough $\sigma$ there exists constant $C$ such that for all $x \in [a_0, b_0]$, $\phi_{\sigma}*f_0 \ge Cf_0$, especially on set $A^c_{\sigma}$ $f_0$ satisfies $\phi_{\sigma}*f_0 \ge f_0/3$. Also in the proof of Lemma \ref{lem:density} we can find $\rho \in (0,1)$ such that $f_{\beta} > \rho f_0$. Then we have on set $A_\sigma$ with sufficiently small $\sigma$
\begin{eqnarray*}
\phi_{\sigma}*h_{\beta} = \frac{\phi_{\sigma}*f_{\beta}}{1+O(\sigma^{\beta})} \ge \frac{\rho \phi_{\sigma}*f_0}{1 + O(\sigma^{\beta})} \ge K f_0,
\end{eqnarray*}
for some positive and finite constant $K$. Applying Lemma \ref{lem:convolution}, the first integral on the right side of \ref{eq:KL} can be bounded by 
\begin{align*} 
\int_{A_\sigma} \frac{(f_0 - \phi_{\sigma}*h_{\beta})^2}{\phi_{\sigma}* h_{\beta}} 
&\le \int_{A_{\sigma}} \frac{[f_0(x) - f_0(x)(1 + O(D(x)\sigma^{\beta}))]^2}{K_1 f_0(x)} \\
&\precsim \int_{A_{\sigma}} f_0(x) O(D^2(x)\sigma^{2\beta}) = O(\sigma^{2\beta}).
\end{align*}
To bound the second integral of r.h.s again by Remark 3 in \cite{ghosal1999posterior} we get $\phi_{\sigma}*h_{\beta} \ge \frac{\rho}{3(1+ O(\sigma^{\beta}))}f_0$, so easily we can find a constant $C < 1$ such that $\phi_{\sigma}*h_{\beta} \ge C f_0$. With Lemma \ref{lem:tail} clearly the second and third term can be bounded by $O(\sigma^{2\beta})$.
\end{proof}

\begin{lem}\label{lem:entropy}
Let $\mathbb{H}_1^a$ denote the unit ball of RKHS of the Gaussian process with rescaled parameter $a$ and $\mathbb{B}_1$ be the unit ball of $C[0,1]$. For $r >1$, there exists a constant $K$, such that for $\epsilon < 1/2$,
\begin{eqnarray}\label{eq:entropy}
\log N(\epsilon, \cup_{a \in [0,r]} \mathbb{H}_1^a, \norm{\cdot}_{\infty}) \le Kr\bigg( \log\frac{1}{\epsilon} \bigg)^2. 
\end{eqnarray}
\end{lem}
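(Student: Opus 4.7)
The proof plan is to combine the classical single-scale entropy bound for the RKHS unit ball of the rescaled squared-exponential Gaussian process with a discretization of the scaling parameter $a \in [0,r]$. First, I would invoke the standard single-scale bound of van der Vaart and van Zanten: for fixed $a \ge 1$ and $\epsilon < 1/2$,
\begin{eqnarray*}
\log N(\epsilon, \mathbb{H}_1^a, \norm{\cdot}_{\infty}) \le K_0 \, a \, (\log(1/\epsilon))^2,
\end{eqnarray*}
with $K_0$ a universal constant. This bound stems from the super-exponential decay of the eigenvalues of the squared-exponential kernel together with its analytic character, and is the workhorse underneath the claimed estimate.

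Next, I would discretize the scaling range. Choose grid points $0 = a_0 < a_1 < \cdots < a_M = r$ with mesh $\delta = a_i - a_{i-1}$ to be calibrated later, so that $M = \lceil r/\delta \rceil$. By the single-scale bound, each $\mathbb{H}_1^{a_i}$ admits an $(\epsilon/2)$-net $\mathcal{N}_i$ in sup-norm with $\log |\mathcal{N}_i| \le K_0 \, r \, (\log(2/\epsilon))^2$.

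The key technical step is a continuity estimate: for any $a \in [a_{i-1}, a_i]$ and any $h \in \mathbb{H}_1^a$, I would show that there exists an element $\tilde{h} \in \mathbb{H}_1^{a_i}$ (or a mild enlargement) with $\norm{h - \tilde h}_\infty \le \epsilon/2$. Writing $h(t) = h_0(at)$ for an element $h_0$ in the base-scale RKHS on $[0, a]$, one obtains
\begin{eqnarray*}
|h_0(at) - h_0(a_i t)| \le |a - a_i| \cdot \sup_{s \in [0, r]} |h_0'(s)|,
\end{eqnarray*}
and RKHS elements of the squared-exponential process have derivative bounds controlled by the RKHS norm times a polynomial in $r$. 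Choosing $\delta$ small enough (polynomial in $\epsilon$ and $1/r$) makes this perturbation at most $\epsilon/2$.

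Putting the pieces together, $\cup_{i=0}^{M} \mathcal{N}_i$ is an $\epsilon$-net for $\cup_{a \in [0,r]} \mathbb{H}_1^a$, and
\begin{eqnarray*}
\log N(\epsilon, \textstyle\cup_{a \in [0,r]} \mathbb{H}_1^a, \norm{\cdot}_\infty) \le \log(M+1) + \max_i \log |\mathcal{N}_i| \le K \, r \, (\log(1/\epsilon))^2,
\end{eqnarray*}
where the logarithm of the grid size is absorbed into the main term using $r > 1$ and $\log(M+1) \lesssim \log r + \log(1/\epsilon)$. The main obstacle I expect is the continuity-in-$a$ estimate: quantifying precisely how fast a unit-norm RKHS element can change when the bandwidth varies requires the analytic structure of the squared-exponential kernel and careful control of derivative seminorms, rather than a soft compactness argument.
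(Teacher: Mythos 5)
Your proposal is correct but takes a genuinely different route from the paper's. The paper reopens the construction of the single-scale $\epsilon$-net $\mathcal{F}^a$ from \cite{van2009adaptive}: unit-norm elements of $\mathbb{H}_1^a$ extend to bounded analytic functions on a complex strip $\Omega^a$ of width $R=\delta/(6\max(a,1))$, and one checks that elements of $\mathbb{H}_1^b$ for $|b-a|\le1$ also extend analytically to the \emph{same} strip with controlled bound, so that $\mathcal{F}^a$ is simultaneously an $\epsilon$-net for all $\mathbb{H}_1^b$ with $|b-a|\le1$; a mesh-one grid of $\lfloor r\rfloor+1$ scale values then suffices, and the stated bound follows from monotonicity of $\#\mathcal{F}^a$ in $a$. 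You instead take the single-scale entropy bound as a black box and add a continuity-in-$a$ perturbation, which can be made precise via the spectral representation: for $h\in\mathbb{H}_1^a$ write $h(t)=\int e^{iat\lambda}\psi(\lambda)\,dm_w(\lambda)$ with $\int|\psi|^2\,dm_w\le1$, set $\tilde h(t)=\int e^{ia_it\lambda}\psi(\lambda)\,dm_w(\lambda)\in\mathbb{H}_1^{a_i}$, and obtain $\norm{h-\tilde h}_\infty\le|a-a_i|\bigl(\int\lambda^2\,dm_w\bigr)^{1/2}$, so a mesh $\delta$ of order $\epsilon$ suffices and $\log(M+1)\precsim\log r+\log(1/\epsilon)$ is indeed absorbed into $r(\log(1/\epsilon))^2$ since $r>1$. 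Your route is more modular, never opening the net construction, at the cost of a much finer grid; the paper's route requires re-deriving the net's validity for nearby scales but gets away with a coarse unit-mesh grid. One small correction to your sketch: the derivative bound on the base-scale RKHS element $h_0$ is a universal constant, namely the square root of the second moment of the spectral measure, not a polynomial in $r$, which only makes your mesh calibration easier than you anticipated.
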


\begin{proof}
 Since we can write any element of $\mathbb{H}_1^a$ as a function of $\operatorname{Re}(z)$ by  \cite{van2009adaptive} which can be analytically extended to some interval containing $\Omega^{a} = \{z\in\mathbb{C}: |\operatorname{Re}(z)| \le R\}$ with $R = \delta/(6\max(a,1))$, so for any $ h \in \Omega^a$, $|2a\operatorname{Re}(z)| \le |\frac{\delta}{6\max(a,1)} \cdot 2a| = \delta/3$. Consider any $b$ with $|b-a|\le1$, we can show that any element of $\mathbb{H}_1^b$ can be extended analytically to $\Omega^a$ noting that for $z \in \Omega^a$ related to the maximum norm, 
\begin{eqnarray*}
\norm{2b\operatorname{Re}(z)} \le \norm{2a\operatorname{Re}(z)} + \norm{2(b-a)\operatorname{Re}(z)} \le \frac{\delta}{3}+\norm{2\operatorname{Re}(z)} \le \frac{2}{3}\delta.
\end{eqnarray*}  
Therefore, $\mathcal{F}^a$ forms one $\epsilon$-net over $\mathbb{H}_1^b$. We find one set $\Gamma = \{a_i, i = 1, \dots, k\}$ with $k = \lfloor r\rfloor +1$ and $a_k = r$, such that for any $b\in [0,r]$ there exists some $a_i$ satisfying $|b-a_i|\le 1$, so that $\cup_{i\le k}\mathcal{F}^{a_i}$ forms an $\epsilon$-net over $\cup_{a \le r} \mathbb{H}_1^a$. Since the covering number of $\cup_{i\le k}\mathcal{F}^{a_i}$ is bounded by summation of covering number of $\mathcal{F}^{a_i}$, we obtain
\begin{eqnarray*}
\log N(\epsilon, \cup_{a \in [0,r]} \mathbb{H}_1^a, \norm{\cdot}_{\infty}) \le \log \bigg(\sum_{i=1}^k \#(\mathcal{F}^{a_i})\bigg) \le \log(k \cdot \#(\mathcal{F}^r)) \le  Kr\bigg( \log\frac{1}{\epsilon} \bigg)^2.
 \end{eqnarray*} 
To complete the proof, note that the piecewise polynomials are constructed on the partition of $[0,1]$, $\cup_{i\le m}B_i$, where $B_i$'s are disjoint interval with length shorter than $R= \delta/(6\max(a,1))$, so the total number of polynomials is a non-decreasing in $a$. Also we find that when building the mesh grid of the coefficients of polynomials in each $B_i$, both the approximation error and tail estimate are invariant to interval length $R$, therefore we have $\#(\mathcal{F}^a) \le \#(\mathcal{F}^b)$ if $a \le b$, for $a,b \in [0,r]$.  
\end{proof}
\begin{rem}
With larger $a$ we need a finer partition on $[0,1]$ while the grid of coefficients of piecewises polynomial remains the same except the range and the meshwidth will change together along with $a$. Since we can see the element $h$ of RKHS ball as a function of $it$ and with Cauchy formula we can bound the derivatives of $h$ by $C/R^n$, where $|h|^2 \le C^2$. 
\end{rem}

\section{Proof of the main theorem}\label{main}
\noindent \textbf{Proof of Theorem \ref{thm:compact}:}
Following \cite{ghosal2000convergence}, we need to find sequences $\bar{\epsilon}_n,\tilde{\epsilon}_n \to 0$ with
$n\min\{\bar{\epsilon}_n^2,\tilde{\epsilon}_n^2\} \to \infty$  such that there exist constants $C_1, C_2, C_3, C_4> 0$ and sets $\mathcal{F}_n \subset \mathcal{F}$ so that,
\begin{align}
& \log N(\epsilon_n, \mathcal{F}_n, d) \leq C_1n\bar{\epsilon}_n^2 \label{eq1}\\
& \Pi(\mathcal{F}_n^c) \leq C_3\exp\{-n\tilde{\epsilon}_n^2(C_2+4)\}\label{eq2} \\
& \Pi\bigg( f_{\mu, \sigma}:  \int f_0 \log \frac{f_0}{f_{\mu, \sigma}} \leq \tilde{\epsilon}_n^2, \int f_0 \log \bigg(\frac{f_0}{f_{\mu, \sigma}}\bigg)^2 \leq \tilde{\epsilon}_n^2 \bigg) \geq C_4\exp\{-C_2n\tilde{\epsilon}_n^2\} \label{eq3}.
\end{align}
Then we can conclude that for $\epsilon_n = \max\{\bar{\epsilon}_n, \tilde{\epsilon}_n\}$ and sufficiently large $M > 0$, the posterior probability
\begin{eqnarray*}
\Pi_n(f_{\mu, \sigma}: d(f_{\mu, \sigma}, f_0) > M\epsilon_n | Y_1, \ldots, Y_n) \to 0 \, \, \text{a.s.}\, P_{f_0}.
\end{eqnarray*}

We consider the Gaussian process $\mu \sim W^A$ given $A$, with $A$ satisfying Assumption \ref{ass:1p}.

We will first verify (\ref{eq3}) along the lines of \cite{ghosal2007posterior}.
Note that
\begin{eqnarray}\label{eq:H2}
h^{2}(f_0, f_{\mu, \sigma}) \precsim h^{2}(f_0, f_{\mu_0, \sigma}) + h^{2}(f_{\mu_0, \sigma}, f_{\mu, \sigma}).
\end{eqnarray}
Since $f_{\mu_0, \sigma} = \phi_{\sigma}*h_{\beta}$, by Lemma \ref{lem:KL}, one obtains under Assumptions \ref{ass:1} and \ref{ass:2}, \begin{eqnarray}\label{eq:H}
 h^{2}(f_0, f_{\mu_0, \sigma}) \le \int f_0 \log\bigg(\frac{f_0}{f_{\mu_0, \sigma}}\bigg) \precsim O(\sigma^{2\beta}).
\end{eqnarray}

From Lemma \ref{lem:hellinger} and the following remark, we obtain
\begin{eqnarray}
h^{2}(f_{\mu_0, \sigma}, f_{\mu, \sigma}) \precsim \frac{\norm{\mu- \mu_0}_{\infty}^2}{\sigma^2}.
\end{eqnarray}
From Lemma 8 of \cite{ghosal2007posterior}, one has
\begin{eqnarray}\label{eq:KtoH}
\int f_0 \log \bigg(\frac{f_0}{f_{\mu, \sigma}}\bigg)^{i} \leq h^2(f_0, f_{\mu,\sigma})\bigg(1 + \log \norm{\frac{f_0}{f_{\mu, \sigma}}}_{\infty}\bigg)^{i}
\end{eqnarray}
for $i=1,2$.

From (\ref{eq:H2})-(\ref{eq:KtoH}), for any $b \geq 1$ and $\tilde{\epsilon}_n^2 = \sigma_n^{2\beta}$,
\begin{eqnarray*}
\big\{ \sigma \in [\sigma_n, \sigma_n + \sigma_n^b], \norm{\mu - \mu_0}_{\infty} \precsim \sigma_n^{\beta+1} \big\} \subset \\ \bigg\{  \int f_0 \log \frac{f_0}{f_{\mu, \sigma}} \precsim \sigma_n^{2\beta}, \int f_0 \log \bigg(\frac{f_0}{f_{\mu, \sigma}}\bigg)^2 \precsim \sigma_n^{2\beta}\bigg\}.
\end{eqnarray*}

Since $\mu_0 \in C^{\beta+1}[0,1]$, from Section 5.1 of \cite{van2009adaptive},
\begin{eqnarray*}
\mbox{P}(\norm{\mu - \mu_0}_{\infty} \leq 2\delta_n) \geq C_4\exp\{-C_5(1/\delta_n)^{\frac{1}{\beta+1}}\log(\frac{1}{\delta_n})^{2\vee q} \}(C_6/\delta_n)^{p+1/\beta+1},
\end{eqnarray*}
for $\delta_n \to 0$ and constants $C_4, C_5, C_6 > 0$.  Letting $\delta_n = \sigma_n^3$, we obtain
\begin{eqnarray*}
\mbox{P}(\norm{\mu - \mu_0}_{\infty} \leq 2\delta_n) \geq \exp\bigg\{-C_7\bigg(\frac{1}{\sigma_n}\bigg)\log^{2\vee q}\bigg(\frac{1}{\sigma_n^{\beta+1}}\bigg)\bigg\},
\end{eqnarray*}
for some constant $C_7 > 0$.
Since $\sigma \sim IG(a_{\sigma}, b_{\sigma})$,
we have
\begin{eqnarray*}
\mbox{P}(\sigma \in [\sigma_n, 2\sigma_n ]) &=& \frac{b_{\sigma}^{a_{\sigma}}}{\Gamma(a_{\sigma})}\int_{\sigma_n}^{2\sigma_n}x^{-(a_{\sigma}+1)} e^{-b_{\sigma}/x}dx \\ &\geq& \frac{b_{\sigma}^{a_{\sigma}}}{\Gamma(a_{\sigma})}\int_{\sigma_n}^{2\sigma_n} e^{-2b_{\sigma}/x}dx \\
&\geq& \frac{b_{\sigma}^{a_{\sigma}}}{\Gamma(a_{\sigma})}\sigma_n\exp\{-b_{\sigma}/\sigma_n \}\\
&\geq& \exp\{-C_8/\sigma_n \},
\end{eqnarray*}
for some constant $C_8> 0$. Hence
\begin{eqnarray*}
\mbox{P}\{\sigma \in [\sigma_n, 2\sigma_n ], \norm{\mu - \mu_0}_{\infty} \precsim \sigma_n^{\beta+1}\} 
&\geq& \exp\bigg\{-C_7\bigg(\frac{1}{\sigma_n}\bigg)\log^{2\vee q}\bigg(\frac{1}{\sigma_n^{\beta+1}}\bigg)\bigg\}\exp\{-C_8/\sigma_n \}\\
&\ge& \exp\bigg\{-2C_7\bigg(\frac{1}{\sigma_n}\bigg)\log^{2\vee q}\bigg(\frac{1}{\sigma_n^{\beta+1}}\bigg)\bigg\}. 
\end{eqnarray*}
Then (\ref{eq3}) will be satisfied with $\tilde{\epsilon}_n = n^{-\frac{\beta}{2\beta+1}}\log^{t_1}(n)$, where $t_1=\frac{\beta(2\vee q)}{2\beta+1}$ and some $C_9 > 0$.

Next we construct a sequence of subsets $\mathcal{F}_n$ such that \ref{eq1} and \ref{eq2} are satisfied with
$\bar{\epsilon}_n = n^{-\frac{\beta}{2\beta+1}}\log ^{t_2}n$ and $\tilde{\epsilon}_n$ for some global constant $t_2 > 0$.

Letting $\mathbb{B}_1$ denote the unit ball of
$C[0,1]$ and given positive sequences $M_n, r_n$, define
\begin{eqnarray*}
B_n =  \cup_{a < r_n}(M_n \mathbb{H}^a_1) + \bar{\delta}_n\mathbb{B}_1
\end{eqnarray*}
as in \cite{van2009adaptive}, with $\bar{\delta}_n =\bar{\epsilon}_nl_n/K_1, K_1 = 2(2/\pi)^{1/2}$ and let
\begin{eqnarray*}
\mathcal{F}_n = \{f_{\mu, \sigma}: \mu \in B_n, l_n < \sigma < h_n \}.
\end{eqnarray*}
First we need to calculate $N(\bar{\epsilon}_n, \mathcal{F}_n, \norm{\cdot}_1)$. Observe that for $\sigma_2 >\sigma_1 > \frac{\sigma_2}{2}$,
\begin{eqnarray*}
\norm{f_{\mu_1, \sigma_1} - f_{\mu_2, \sigma_2}}_1 \leq \bigg(\frac{2}{\pi}\bigg)^{1/2}\frac{\norm{\mu_1 - \mu_2}_{\infty}}{\sigma_1} + \frac{3(\sigma_2 - \sigma_1)}{\sigma_1}.
\end{eqnarray*}

Taking $\kappa_n =\min\{\frac{\bar{\epsilon}_n}{6}, 1\}$ and $\sigma_m^n = l_n (1+ \kappa_n)^m, m \geq 0$, we obtain a partition of $[l_n, h_n]$ as $l_n=\sigma_0^n < \sigma_1^n < \cdots < \sigma_{m_{n}-1}^n < h_n \leq \sigma_{m_n}^n$ with
\begin{eqnarray}\label{eq:entropy1}
m_n= \bigg(\log \frac{h_n}{l_n}\bigg) \frac{1}{\log( 1+\kappa_n)} +1.
\end{eqnarray}
One can show that $\frac{3(\sigma_m^n -\sigma_{m-1}^n)}{\sigma_{m-1}^n} = 3\kappa_n \leq \bar{\epsilon}_n/2$. Let
$\{\tilde{\mu}_k^n, k=1, \ldots, N(\bar{\delta}_n, B_n, \norm{\cdot}_{\infty})\}$  be
a $\bar{\delta}_n$-net of $B_n$. Now consider the set
\begin{eqnarray} \label{eq:net}
\{(\tilde{\mu}_k^n, \sigma_{m}^n): k=1, \ldots, N(\bar{\delta}_n, B_n, \norm{\cdot}_{\infty}), 0\leq m\leq m_n \}.
\end{eqnarray}
Then for any $f= f_{\mu, \sigma} \in \mathcal{F}_n$, we can find $(\tilde{\mu}_k^n, \sigma_{m}^n)$ such that
$\norm{\mu - \tilde{\mu}_k^n}_{\infty} < \bar{\delta}_n$. In addition, if one has $\sigma \in (\sigma_{m-1}^{n}, \sigma_m^{n}]$,
then
\begin{eqnarray*}
\norm{f_{\mu, \sigma} - f_{\mu_k^n, \sigma^n_m}}_1 \leq \bar{\epsilon}_n.
\end{eqnarray*}
Hence the set in (\ref{eq:net}) is an $\bar{\epsilon}_n$-net of $\mathcal{F}_n$ and its covering number is given by
\begin{eqnarray*}
m_nN(\bar{\delta}_n, B_n, \norm{\cdot}_{\infty}).
\end{eqnarray*}
From the proof of Theorem 3.1 in \cite{van2009adaptive}, for any $M_n, r_n$ with $r_n > a_0$, we obtain
\begin{eqnarray}\label{eq:entropy2}
\log N(2\bar{\delta}_n, B_n, \norm{\cdot}_{\infty}) \leq K_2 r_n \bigg( \log \bigg(\frac{M_n}{\bar{\delta}_n}\bigg)\bigg)^{2}.
\end{eqnarray}
Again from the proof of Theorem 3.1 in \cite{van2009adaptive}, for $r_n > 1$ and for $M_n^2 > 16K_3r_n (\log (r_n / \bar{\delta}_n))^2$, we have
\begin{eqnarray}\label{eq:compact1}
\mbox{P}(W^A \notin B_n) \leq \frac{K_4r_n^p e^{-K_5r_n\log^q r_n}}{K_5\log^q r_n} + \exp\{-M_n^2/8\}
\end{eqnarray}
for constants $K_3, K_4, K_5 > 0$.

Next we calculate $P(\sigma \notin [l_n, h_n])$. Observe that
\begin{eqnarray}
\mbox{P}(\sigma \notin [l_n, h_n ]) &=& P(\sigma^{-1} < h_n^{-1}) + P(\sigma^{-1} > l_n^{-1})\nonumber\\
 &\leq& \sum_{k=\alpha_{\sigma}}^{\infty}\frac{e^{-b_{\sigma}h_n^{-1}}(b_{\sigma}h_n^{-1})^k}{k!} + \frac{b_{\sigma}^{a_{\sigma}}}{\Gamma(a_{\sigma})}\int_{l_n^{-1}}^{\infty} e^{-b_{\sigma}x/2}dx \nonumber \\
&\leq& e^{-a_{\sigma}\log(h_n)} + \frac{b_{\sigma}^{a_{\sigma}}}{\Gamma(a_{\sigma})}e^{-b_{\sigma}l_n^{-1}/2}.\label{eq:compact2}
\end{eqnarray}

Thus with $h_n = O(\exp\{n^{1/{2\beta+1}}(\log n)^{2t_1}\}), l_n = O(n^{-1/{2\beta+1}}(\log n)^{-2t_1}), r_n =O(n^{1/{2\beta+1}}(\log n)^{2t_1}), M_n = O(n^{1/{2\beta+1}}(\log n)^{t_1+1})$,
(\ref{eq:compact1}) and (\ref{eq:compact2}) implies
\begin{eqnarray*}
\Pi(\mathcal{F}_n^c)= \exp\{-K_6n\tilde{\epsilon}^2_n\}
\end{eqnarray*}
for some constant $K_6 > 0$ guaranteeing that (\ref{eq2}) is satisfied with $\tilde{\epsilon}_n = n^{-\beta/{2\beta+1}}(\log n)^{t_1}$.

Also with $\bar{\epsilon}_n = n^{-\beta/{2\beta+1}}(\log n)^{t_1+1}$,  it follows from
(\ref{eq:entropy1}) and (\ref{eq:entropy2}) that
\begin{eqnarray*}
\log N(\bar{\epsilon}_n, \mathcal{F}_n, \norm{\cdot}_1) \leq K_7 n^{1/{2\beta+1}}(\log n)^{2t_1+2}
\end{eqnarray*}
for some constant $K_7 > 0$.

Hence $\max\{\bar{\epsilon}_n, \tilde{\epsilon}_n\} = n^{-\beta/{2\beta+1}}(\log n)^{t_1+1}$.

\section{Discussion}\label{sec:discussion}

Non-linear latent variable models offer a flexible modeling framework in a broad variety of problems and improved practical performance has been demonstrated by \cite{lawrence2004gaussian,lawrence2005probabilistic,lawrence2007hierarchical,ferris2007wifi,kundu2011bayesian} among others. The univariate density estimation model studied here can be extended to multivariate density estimation, latent factor models and density regression problems. 

When the density is compactly supported, the quantile function is a continuous function on $[0, 1]$. Hence one can use standard results on concentration bounds for Gaussian processes \citep{van2008reproducing}. However, for densities supported on $\mathbb{R}$, the results fail as the  corresponding quantile function is unbounded near zero and one. In this case, assumptions on the tail behavior of the true density as well as a careful analysis on the behavior of the corresponding quantile function near boundary are required. We propose to address this problem elsewhere. 

\appendix

\section{Appendix}
\subsection{Proof of Theorem \ref{thm:support}}
\begin{proof}
Let $f_0$ be a density with quantile function $\mu_0$ that satisfies the conditions of Theorem \ref{thm:support}. Observe that $\norm{\mu_0}_1 = \int_{t=0}^1 \abs{\mu_0(t)} dt = \int_{-\infty}^{\infty} \abs{z} f_0(z) dz < \infty$ since $f_0$ has a finite first moment, and thus $\mu_0 \in \mbox{L}_1[0, 1]$. Fix $\epsilon > 0$. We want to show that $\Pi\{ B_{\epsilon}(f_0) \} > 0$, where $B_{\epsilon}(f_0) = \{f ~:~ \norm{f - f_0}_1 < \epsilon\}$.

Note that $\mu_0 \notin C[0, 1]$, so that $\mbox{pr}( \norm{\mu - \mu_0}_{\infty} < \epsilon)$ can be zero for small enough $\epsilon$. The main idea is to find a continuous function $\tilde{\mu_0}$ close to $\mu_0$ in $L_1$ norm and exploit the fact that the prior on $\mu$ places positive mass to arbitrary sup-norm neighborhoods of $\tilde{\mu_0}$. The details are provided below.

Since $\norm{\phi_{\sigma}*f_0 - f_0}_1 \to 0$ as $\sigma \to 0$, find $\sigma_1$ such that $\norm{\phi_{\sigma}*f_0 - f_0}_1 < \epsilon/2$ for $\sigma < \sigma_1$. Pick any $\sigma_0 < \sigma_1$.
Since $C[0, 1]$ is dense in $\mbox{L}_1[0, 1]$, for any $\delta > 0$, we can find a continuous function $\tilde{\mu_0}$ such that $\norm{\mu_0 - \tilde{\mu_0}}_1 < \delta$. Now, $\norm{ f_{\mu, \sigma} - f_{\tilde{\mu_0}, \sigma} }_1 \leq C \norm{ \mu - \tilde{\mu_0} }_1/\sigma$ for a global constant $C$. Thus, for $\delta = \epsilon \sigma_0/4$,
\begin{align*}
\big\{f_{\mu, \sigma} ~:~ \sigma_0 < \sigma < \sigma_1, \norm{\mu - \tilde{\mu_0}}_{\infty} < \delta \big\} \subset \big\{f_{\mu, \sigma} ~:~ \norm{f_0 - f_{\mu, \sigma}}_1 < \epsilon \big\},
\end{align*}
since $\norm{f_0 - f_{\mu, \sigma}}_1 < \norm{f_0 - f_{\mu_0, \sigma}}_1 + \norm{f_{\mu_0, \sigma} - f_{\tilde{\mu_0}, \sigma}}_1 + \norm{f_{\tilde{\mu_0}, \sigma} - f_{\mu, \sigma}}_1$ and $f_{\mu_0, \sigma} = \phi_{\sigma}*f_0$. Thus, $\Pi\{ B_{\epsilon}(f_0) \} > \mbox{pr}(\norm{\mu - \tilde{\mu_0}}_{\infty} < \delta) ~ \mbox{pr}(\sigma_0 < \sigma < \sigma_1) > 0$, since $\Pi_{\mu}$ has full sup-norm support and $\Pi_{\sigma}$ has full support on $[0, \infty)$.
\end{proof}

\subsection{Proof of Lemma \ref{lem:approx}}\label{app:combination}
\begin{proof}
Consider $f_j$ constructed by (\ref{eq:constrction}). When $i=1, f_1 = 2f_{0} - \phi_{\sigma}*f_0$, so the form holds. 
By induction, suppose this form holds for $j > 1$, then 
\begin{eqnarray*}
f_{j+1} &=& f_0 - (\phi_{\sigma}*f_j - f_{j}) \\
&=& f_0 +  \sum_{i=0}^{j}(-1)^{i+1} {j+1 \choose i+1} \phi_{\sigma}^{(i+1)}* f_0 + \sum_{i=0}^{j}(-1)^i {j+1 \choose i+1} \phi_{\sigma}^{(i)}* f_0 \\
&=& (j+2)f_0 +  \sum_{i=1}^{j+1}(-1)^{i} {j+1 \choose i+1} \phi_{\sigma}^{(i)}* f_0 +  \sum_{i=1}^{j}(-1)^{i} {j+1 \choose i} \phi_{\sigma}^{(i)}* f_0\\
&=& (j+2)f_0  + \sum_{i=1}^{j} (-1)^{i} \bigg( {{j+1 \choose i+1} + {j+1 \choose i}} \bigg) \phi_{\sigma}^{(i)}*f_0 + (-1)^{j+1} \phi_{\sigma}^{(i +1)}*f_0 \\
&=& (j+2)f_0 + \sum_{i=1}^{j}(-1)^{i} {j+2 \choose i+1} \phi_{\sigma}^{(i)}* f_0 + (-1)^{j+1} \phi_{\sigma}^{(i +1)}*f_0\\
&=& \sum_{i=0}^{j+1}(-1)^{i} {j+2 \choose i+1} \phi_{\sigma}^{(i)}* f_0.
\end{eqnarray*}
It holds for $j +1$, which completes the proof.
\end{proof}

\bibliographystyle{plain}
\bibliography{Xbib19}
\end{document}